\theoremstyle{definition}
\newtheorem{theorem}{Theorem}[section]
\newtheorem{proposition}[theorem]{Proposition}
\newtheorem{lemma}[theorem]{Lemma}
\newtheorem{corollary}[theorem]{Corollary}
\newtheorem{remark}[theorem]{Remark}
 \numberwithin{equation}{section}
\numberwithin{equation}{section}
\newcommand{\di}{\displaystyle}
\definecolor{Purple}{RGB}{100,0,150}
\definecolor{Green}{RGB}{0,120,20}
\begin{document}
  
\title[Dedekind Eta Function]{An Elementary Proof of the Transformation Formula for the Dedekind Eta Function}
\author{Ze-Yong Kong and Lee-Peng Teo}
\address{Department of Mathematics, Xiamen University Malaysia\\Jalan Sunsuria, Bandar Sunsuria, 43900, Sepang, Selangor, Malaysia.}
\email{MAT2004438@xmu.edu.my, lpteo@xmu.edu.my}
\begin{abstract}The Dedekind eta function $\eta(\tau)$ is defined by
\[\eta(\tau)=e^{\pi i\tau/12}\prod_{n=1}^{\infty}\left(1-e^{2\pi i n\tau}\right),\quad\text{when}\;\text{Im}\,\tau>0.\]
It plays an important role in  number theory, especially in the theory of modular forms. Its 24$^{\text{th}}$ power, $\eta(\tau)^{24}$, is a modular form of weight 12 for the modular group $\text{PSL}\,(2,\mathbb{Z})$. Up to a constant, $\eta(\tau)^{24}$ is equal to the modular discriminant $\Delta(\tau)$.

In this note, we give an elementary proof of the transformation formula for the Dedekind eta function under the action of the modular group $\text{PSL}\,(2,\mathbb{Z})$. We start by giving a proof of the transformation formula
\[\eta\left(-\frac{1}{\tau}\right)=(-i\tau)^{1/2}\eta(\tau)\]using the Jacobi triple product identity and the Poisson summation formula. Both of these formulas have elementary proofs.

After we   establish some identities for the Dedekind sum,
the transformation formula for $\eta(\tau)$ under the transformation
\[\tau\mapsto \frac{a\tau+b}{c\tau+d}\]induced by a general element  of the modular group $\text{PSL}\,(2,\mathbb{Z})$ is  derived by induction.
\end{abstract}
\maketitle
\section{Introduction}
 The Dedekind eta function is introduced by Dedekind in 1877 and is defined in the upper half plane $\di \mathbb{H}=\left\{\tau\,|\,\text{Im}\,\tau>0\right\}$ by the equation
\begin{align}\label{eq3_31_8}
\eta(\tau)=e^{\pi i\tau/12}\prod_{n=1}^{\infty}\left(1-e^{2\pi i n\tau}\right).
\end{align}It is closely related to the theory of modular forms \cite{Apostol_2}. In this note, we are going to derive the following formula which describes the transformation of $\eta(\tau)$ under a linear fractional transformation defined by an element 
 of the modular group $\Gamma=\text{PSL}\,(2,\mathbb{Z})$. 
\begin{equation}\label{eq8_13_6_2}
\eta\left(\frac{a\tau+b}{c\tau+d}\right)=\exp\left\{\pi i\left(\frac{a+d}{12c}+s(-d,c)\right)\right\}\left\{-i(c\tau+d)\right\}^{1/2}\eta(\tau).
\end{equation}
 Here $\di\begin{bmatrix} a & b\\ c & d\end{bmatrix}$ is an element of $\Gamma$ with $c>0$, $s(-d,c)$ is a Dedekind sum. Eq. \eqref{eq8_13_6_2} is known as the Dedekind functional equation for the Dedekind eta function. One can establish that
the number 
\[\omega(a,b,c,d)=\frac{a+d}{c}+12s(-d,c)\] is an integer. Therefore, the function $f(\tau)=\eta(\tau)^{24}$, which is the  24$^{\text{th}}$ power of $\eta(\tau)$, satisfies
\[f\left(\frac{a\tau+b}{c\tau+d}\right)\frac{1}{(c\tau+d)^{12}}=f(\tau)\quad\text{for all}\;\begin{bmatrix} a & b\\c&d\end{bmatrix}\in\Gamma.\] In other words, $\eta(\tau)^{24}$ is a modular form of weight 12 for the modular group $\Gamma$.

The  Dedekind functional equation \eqref{eq8_13_6_2} was proved using a more general transformation formula of Iseki \cite{Iseki_1957} in the book \cite{Apostol_2}. In the special case where $\begin{bmatrix} a & b\\c& d\end{bmatrix}=\begin{bmatrix}0 & -1\\1& 0\end{bmatrix}$, $s(0,1)=0$, and the formula \eqref{eq8_13_6_2} reduces to
\begin{equation}\label{eq230202_2}\eta\left(-\frac{1}{\tau}\right)=(-i\tau)^{-1/2}\eta(\tau).\end{equation}
This formula has been proved using various methods, such as the contour integral method by Siegel \cite{Siegel} (see also the book \cite{Apostol_2}). A slight drawback of Siegel's method is that it involves a limiting process which needs to be justified using advanced theories. 

In this note, we present a  proof of \eqref{eq230202_2} using elementary methods. We first present the proof of the Jacobi triple product formula
\[\prod_{n=1}^{\infty}\left(1-w^{2n}\right)\left(1+w^{2n-1}z^2\right)\left(1+w^{2n-1}z^{-2}\right)=\sum_{n=-\infty}^{\infty} w^{n^2}z^{2n},\quad |w|<1, z\neq 0,\]following the approach  in \cite{Apostol_1}. Then we derive the Euler pentagonal number formula
\[\prod_{n=1}^{\infty}(1-w^n)=\sum_{n=-\infty}^{\infty}(-1)^n w^{\frac{3n^2-n}{2}},\quad |w|<1,\]
from the Jacobi triple product formula.  The Poisson summation formula is then employed to prove the transformation formula \eqref{eq230202_2}.

It is well known that the modular group $\Gamma$ is generated by the two elements $T=\begin{bmatrix} 1 & 1\\0 & 1\end{bmatrix}$ and $S=\begin{bmatrix}0 & -1\\1& 0\end{bmatrix}$. The transformation of $\eta$ under $T$  is given by \[\eta(\tau+1)=e^{\frac{\pi i}{12}}\eta(\tau),\] which is obvious from its definition. The fact that $\Gamma$ is generated by $T$ and $S$ can be proved by induction on $c$ (see for example \cite{Apostol_2}.) Using this idea, we prove the Dedekind functional equation \eqref{eq8_13_6_2} for general transformation using induction. 
This proof is completely elementary.

The purpose of this work is to give a self-contained elementary proof for the Dedekind functional equation. Therefore, we present in detail the proofs of all the results we need.

\bigskip
\section{Fractional Linear Transformations and the Modular Group}
Let $\widehat{\mathbb{C}}=\mathbb{C}\cup\{\infty\}$  be the extended complex plane. It is well-known that a mapping $w:\widehat{\mathbb{C}}\rightarrow\widehat{\mathbb{C}}$ is analytic and bijective if and only if $w$ is a linear fractional transformation, namely, 
\begin{align}\label{eq3_23_5}w(z)=   \frac{az+b}{cz+d} \end{align} for some 4-tuple $(a, b, c, d)$ with $ad-bc\neq 0$. 

For any nonzero complex number $k$, the 4-tuples $(a, b, c, d)$ and $(ka, kb, kc, kd)$ define the same fractional linear transformation. Therefore, we can normalize $a, b, c, d$ by
$$ad-bc=1,$$ and associate this linear fractional transformation with the two-by-two matrix
\begin{align}\label{eq3_23_6}\begin{bmatrix} a &b\\c & d\end{bmatrix}.\end{align} The set of two-by-two matrices of the form \eqref{eq3_23_6} with $ad-bc=1$ is denoted  by $\text{SL}\,(2,\mathbb{C})$. This is a group under matrix multiplication.

Since $(a, b, c, d)$ and $(-a, -b, -c, -d)$ define the same fractional linear transformation, we can define an equivalence relation on $\text{SL}\,(2,\mathbb{C})$ in the following way. If $A$ and $B$ are in $\text{SL}\,(2,\mathbb{C})$, then $A\sim B$ if and only if $$A=\pm B.$$   The quotient of $\text{SL}\,(2,\mathbb{C})$ by this equivalence relation is denoted by $\text{PSL}\,(2,\mathbb{C})$. Let $I$ be the two-by-two identity matrix. Then $H=\{I, -I\}$ is a normal subgroup of $\text{SL}\,(2,\mathbb{C})$. One can easily see that $$\text{PSL}\,(2,\mathbb{C})=\text{SL}\,(2, \mathbb{C})/H.$$Therefore, $\text{PSL}\,(2,\mathbb{C})$ is also a group, which we call the group of fractional  linear  transformations. The group operation is precisely composition of transformations.

The modular group $\Gamma=\text{PSL}\,(2,\mathbb{Z})$ is the subgroup of $\text{PSL}\,(2, \mathbb{C})$ consists of those elements $\di \begin{bmatrix} a &b\\c & d\end{bmatrix}$, where $a, b, c, d$ are integers and $ad-bc=1$. It is well-known that it is generated by the two elements
\begin{equation}\label{eq230130_6}T=\begin{bmatrix} 1 & 1\\0 & 1\end{bmatrix}\hspace{1cm}
\text{and}\hspace{1cm}S=\begin{bmatrix} 0 & -1\\1 & 0\end{bmatrix},\end{equation}
which describe respectively the linear transformation
\[z\mapsto z+1 \hspace{1cm}\text{and}\hspace{1cm}
 z\mapsto -\frac{1}{z}.\]
\bigskip
\section{Jacobi Triple Product Identity}
In this section, we derive the Jacobi triple product identity following the approach in \cite{Apostol_1}.  
\begin{theorem}[Jacobi Triple Product Identity]
When $w$ and $z$ are complex numbers with $|w|<1$ and $z\neq 0$,
\begin{equation}\label{eq230130_1}\prod_{n=1}^{\infty}\left(1-w^{2n}\right)\left(1+w^{2n-1}z^2\right)\left(1+w^{2n-1}z^{-2}\right)=\sum_{n=-\infty}^{\infty} w^{n^2}z^{2n}.\end{equation}

\end{theorem}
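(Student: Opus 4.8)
The plan is to fix $w$ with $|w|<1$ and view the two doubly infinite factors on the left of \eqref{eq230130_1} as an analytic function of $z$. Writing $\zeta=z^{2}$, set
\[
G(\zeta)=\prod_{n=1}^{\infty}\left(1+w^{2n-1}\zeta\right)\left(1+w^{2n-1}\zeta^{-1}\right).
\]
Since $\sum_{n}|w|^{2n-1}<\infty$, this product converges uniformly on compact subsets of $\mathbb{C}\setminus\{0\}$ and so defines a function analytic on the punctured plane; hence $G$ has a Laurent expansion $G(\zeta)=\sum_{n=-\infty}^{\infty}a_{n}(w)\zeta^{n}$ valid on all of $\mathbb{C}\setminus\{0\}$. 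Because the left side of \eqref{eq230130_1} equals $\prod_{k\ge1}(1-w^{2k})\cdot G(z^{2})$, the identity will follow once I show $a_{n}(w)=w^{n^{2}}/\prod_{k\ge1}(1-w^{2k})$ for all $n$.

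First I would derive a functional equation for $G$. Replacing $\zeta$ by $w^{2}\zeta$ and reindexing (shift $n\mapsto n+1$ in the first product, $n\mapsto n-1$ in the second) creates exactly one extra factor and removes exactly one, and after a short simplification one gets
\[
G(w^{2}\zeta)=\frac{1}{w\zeta}\,G(\zeta).
\]
Inserting the Laurent series and comparing coefficients of $\zeta^{n}$ gives the recursion $a_{n+1}(w)=w^{2n+1}a_{n}(w)$; since $1+3+\cdots+(2n-1)=n^{2}$ (and the recursion runs backwards for $n<0$ as well), this yields $a_{n}(w)=w^{n^{2}}a_{0}(w)$ for every integer $n$, so that
\[
G(\zeta)=a_{0}(w)\sum_{n=-\infty}^{\infty}w^{n^{2}}\zeta^{n}.
\]
Everything now comes down to the single evaluation $a_{0}(w)=\prod_{k\ge1}(1-w^{2k})^{-1}$.

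This last step is the real crux, and the one place where a genuine trick is needed rather than bookkeeping. I would specialize the identity $G(\zeta)=a_{0}(w)\sum_{n}w^{n^{2}}\zeta^{n}$ at two values of $\zeta$. Taking $\zeta=i$ collapses the product to $\prod_{n\ge1}(1+w^{4n-2})$, using $(1+iw^{2n-1})(1-iw^{2n-1})=1+w^{4n-2}$, while the series collapses to $a_{0}(w)\sum_{m}(-1)^{m}w^{4m^{2}}$, because in $\sum_{n}w^{n^{2}}i^{n}$ the odd-index terms cancel in conjugate pairs $n,-n$. Next, applying the already-established relation with the new base $q=w^{4}$ (allowed since $|w^{4}|<1$) at $\zeta=-1$ gives $\sum_{m}(-1)^{m}w^{4m^{2}}=a_{0}(w^{4})^{-1}\prod_{n\ge1}(1-w^{8n-4})^{2}$. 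Eliminating the common series and regrouping infinite products — splitting the index set into residues modulo $2$ and modulo $4$ and using $(1-x)(1+x)=1-x^{2}$ — turns the two relations into
\[
a_{0}(w)\prod_{n=1}^{\infty}(1-w^{2n})=a_{0}(w^{4})\prod_{n=1}^{\infty}(1-w^{8n}).
\]
Thus $\varphi(w):=a_{0}(w)\prod_{n\ge1}(1-w^{2n})$ satisfies $\varphi(w)=\varphi(w^{4})$. Since $a_{0}(0)=1$ (the product $G$ is identically $1$ at $w=0$) and $a_{0}$ is continuous in $w$ (it is the integral $\frac{1}{2\pi i}\oint_{|\zeta|=1}G(\zeta)\zeta^{-1}\,d\zeta$), the function $\varphi$ is continuous at $0$ with $\varphi(0)=1$; iterating $\varphi(w)=\varphi(w^{4})=\varphi(w^{16})=\cdots$ and letting $w^{4^{k}}\to0$ forces $\varphi\equiv1$. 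Hence $a_{0}(w)=\prod_{k\ge1}(1-w^{2k})^{-1}$, and multiplying $G(z^{2})=a_{0}(w)\sum_{n}w^{n^{2}}z^{2n}$ by $\prod_{k\ge1}(1-w^{2k})$ gives \eqref{eq230130_1}. The only points beyond routine calculation are the uniform convergence that licenses the term-by-term operations and the continuity of $a_{0}(w)$, both standard; the substantive difficulty is thus concentrated in the double specialization $\zeta=i$, $\zeta=-1$ used to pin down $a_{0}$.
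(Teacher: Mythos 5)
Your proof is correct and follows essentially the same route as the paper's: a $q$-difference functional equation yielding $a_{n}(w)=w^{n^{2}}a_{0}(w)$, followed by the double specialization at $z^{2}=i$ and $z^{2}=-1$ (with $w\mapsto w^{4}$) to show the invariance under $w\mapsto w^{4}$ and hence pin down $a_{0}$ by iterating to $0$. The only difference is cosmetic bookkeeping: you factor out $\prod_{k\geq 1}(1-w^{2k})$ and track $\varphi(w)=a_{0}(w)\prod_{k\geq 1}(1-w^{2k})$, which is exactly the coefficient the paper calls $a_{0}(w)$.
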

\begin{proof}
When $|w|<1$ and $z\neq 0$, the triple product on the left hand side of \eqref{eq230130_1}  converges absolutely. The sum on the right hand side of \eqref{eq230130_1}  also converges absolutely. 

For $|w|<1$ and $z\neq 0$, define the function $F(w,z)$   by
\[F(w,z)=\prod_{n=1}^{\infty}\left(1-w^{2n}\right)\left(1+w^{2n-1}z^2\right)\left(1+w^{2n-1}z^{-2}\right).\] 
For fixed $w$ with $|w|<1$, $F(w,z)$ can be expanded into a power series in $z$. Since $F(w,z)$ is even in $z$, and $F(w, z)=F(w,z^{-1})$,  the power series of $F(w,z)$ in $z$ has the form
\[F(w,z)=\sum_{n=-\infty}^{\infty}a_n(w)z^{2n},\]
with
\[a_{-n}(w)=a_n(w).\]
Since $F(0,z)=1$, we find that $a_0(0)=1$ and $a_n(0)=0$ if $n\neq 0$.

Now, notice that
\begin{align*}
F(w,wz)&=\prod_{n=1}^{\infty} \left(1-w^{2n}\right)\left(1+w^{2n+1}z^2\right)\left(1+w^{2n-3}z^{-2}\right)\\
&=\frac{1+w^{-1}z^{-2}}{1+wz^2}F(w,z)\\
&=w^{-1}z^{-2}F(w,z).
\end{align*}Therefore,
\begin{align*}
\sum_{n=-\infty}^{\infty}a_n(w)w^{2n}z^{2n}=w^{-1}z^{-2}\sum_{n=-\infty}^{\infty}a_n(w)z^{2n}=\sum_{n=-\infty}^{\infty}a_{n+1}(w)w^{-1}z^{2n}.
\end{align*}This implies that for any integer $n$,
\[a_{n+1}(w)=w^{2n+1}a_n(w).\]
By induction, we find that for $n\geq 1$,
\[a_{-n}(w)=a_n(w)=w^{n^2}a_0(w).\]
Therefore,
\begin{equation}\label{eq230130_3}F(w,z)=a_0(w)\sum_{n=-\infty}^{\infty}w^{n^2}z^{2n}.\end{equation}To prove the theorem, we need to show that $a_0(w)=1$ for all $|w|<1$.
Setting $z=e^{\frac{\pi i}{4}}$ in \eqref{eq230130_3}, we have
\begin{equation}\label{eq230130_2}
\frac{F\left(w, e^{\frac{\pi i}{4}}\right)}{a_0(w)} =\sum_{n=-\infty}^{\infty}w^{n^2}i^n.
\end{equation}Since $i^{2n}=i^{-2n}=(-1)^n$  and $i^{-(2n+1)}=-i^{2n+1}$, we find that the odd terms in the right hand side of \eqref{eq230130_2} cancel, and only the even terms left. This gives
\begin{equation}\label{eq230130_4}
\frac{F\left(w, e^{\frac{\pi i}{4}}\right)}{a_0(w)} =\sum_{n=-\infty}^{\infty}(-1)^nw^{4n^2}.
\end{equation}Setting $z=i$ and replace $w$ with $w^4$ in \eqref{eq230130_3}, we have
\begin{equation}\label{eq230130_5}
\frac{F\left(w^4, i\right)}{a_0(w^4)} =\sum_{n=-\infty}^{\infty}(-1)^nw^{4n^2}.
\end{equation}A comparison of \eqref{eq230130_4} and \eqref{eq230130_5} gives
\[\frac{a_0(w^4)}{a_0(w)}=\frac{F(w^4,i)}{F\left(w, e^{\frac{\pi i}{4}}\right)}.\]
This implies that
\begin{align*}
\frac{a_0(w^4)}{a_0(w)}&=\prod_{n=1}^{\infty}
\frac{(1-w^{8n})(1-w^{8n-4})^2}{(1-w^{2n})(1+iw^{2n-1})(1-iw^{2n-1})}\\
&=\prod_{n=1}^{\infty}
\frac{(1-w^{8n})(1-w^{8n-4})^2}{(1-w^{2n})(1+w^{4n-2})}.
\end{align*}Since every positive integer of the form $4n$ is either of the form $8n$ or of the form $8n-4$, we find that
\[\prod_{n=1}^{\infty}(1-w^{8n})(1-w^{8n-4})=\prod_{n=1}^{\infty}(1-w^{4n}).\]On the other hand,
\[(1-w^{8n-4})=(1-w^{4n-2})(1+w^{4n-2}).\]
Therefore, 
\begin{align*}
\frac{a_0(w^4)}{a_0(w)}=\prod_{n=1}^{\infty}\frac{(1-w^{4n})(1-w^{4n-2})}{1-w^{2n}}.
\end{align*}Since every positive integer of the form $2n$ is either of the form $4n$ or of the form $4n-2$, we find that
\[\prod_{n=1}^{\infty}(1-w^{4n})(1-w^{4n-2})=\prod_{n=1}^{\infty}(1-w^{2n}).\]This implies that
\[a_0(w^4)=a_0(w).\]For for any $w$ with $|w|<1$, 
\[a_0(w)=a_0(w^4)=\cdots=a_0(w^{4k})\] for any positive integer $k$. Since $w^{4k}\rightarrow 0$ when $k\rightarrow\infty$, we find that
\[a_0(w)=a_0(0)=1.\]
This proves that
\[F(w,z)=\sum_{n=-\infty}^{\infty}w^{n^2}z^{2n},\]which completes the proof.
\end{proof}

Notice that the mapping $w=e^{ \pi i \tau}$ maps the upper half plane $\mathbb{H}=\{\text{Im}\,\tau>0\}$ to the unit disc $\mathbb{D}=\{|w|<1\}$. Replacing $w$ by $e^{\pi i \tau}$ and $z$ by $e^{\pi i z}$, the Jacobi triple product identity takes the following form.

\begin{corollary}[Jacobi Triple Product Identity]
For any complex numbers $\tau$ and $z$ with $\text{Im}\,\tau>0$, 
\begin{equation}\label{eq230131_4}
\prod_{n=1}^{\infty}\left(1-e^{2\pi i n\tau}\right)\left(1+e^{\pi i (2n-1)\tau}e^{2\pi i z}\right) \left(1+e^{\pi i (2n-1)\tau}e^{-2\pi i z}\right) =\sum_{n=-\infty}^{\infty}e^{\pi i n^2 \tau }e^{2\pi i n z}.
\end{equation}
\end{corollary}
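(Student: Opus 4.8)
The plan is to derive \eqref{eq230131_4} by a direct change of variables in the Jacobi triple product identity \eqref{eq230130_1}, so almost all of the work has already been done. First I would verify that the substitution is admissible. The map $\tau\mapsto w=e^{\pi i\tau}$ sends the upper half plane $\mathbb{H}$ into the punctured unit disc, since $|w|=|e^{\pi i\tau}|=e^{-\pi\,\text{Im}\,\tau}<1$ whenever $\text{Im}\,\tau>0$; and the map $z\mapsto e^{\pi iz}$ is nowhere zero on $\mathbb{C}$. Hence, setting $w=e^{\pi i\tau}$ and replacing the variable called $z$ in \eqref{eq230130_1} by $e^{\pi iz}$, the hypotheses $|w|<1$ and $z\neq0$ of the Theorem hold for every $\tau\in\mathbb{H}$ and every $z\in\mathbb{C}$, so \eqref{eq230130_1} remains valid after this substitution.

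It then only remains to rewrite the exponents. Under $w=e^{\pi i\tau}$ one has $w^{2n}=e^{2\pi in\tau}$, $w^{2n-1}=e^{\pi i(2n-1)\tau}$ and $w^{n^2}=e^{\pi in^2\tau}$, while $z\mapsto e^{\pi iz}$ gives $z^{\pm2}=e^{\pm2\pi iz}$ and $z^{2n}=e^{2\pi inz}$. Inserting these into the two sides of \eqref{eq230130_1} produces \eqref{eq230131_4} term by term. There is no genuine obstacle in this corollary; the only point that needs a word of justification is the legitimacy of the substitution, and that is immediate from the observation that $e^{\pi i\tau}$ lies in $\mathbb{D}$ together with the absolute convergence of both sides already recorded in the proof of the Theorem.
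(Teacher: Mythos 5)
Your proposal is correct and matches the paper exactly: the paper obtains the corollary by the same substitution $w=e^{\pi i\tau}$, $z\mapsto e^{\pi i z}$ in \eqref{eq230130_1}, noting that $e^{\pi i\tau}$ lies in the unit disc when $\text{Im}\,\tau>0$. Your added remark that $e^{\pi i z}\neq 0$ for all $z\in\mathbb{C}$ is the only hypothesis check needed, and you have it.
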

\section{Poisson Summation Formula}
Poisson summation formula is very useful in the study of number theory. It is a consequence of the theory of Fourier series. In this section, we present the Poisson summation formula and apply it to the Gaussian function.

\begin{theorem}
[Poisson Summation Theorem]
Let $f:\mathbb{R}\rightarrow\mathbb{R}$ be a continuously differentiable function such that
\[\sum_{n=-\infty}^{\infty}f(x+n)\quad\text{and}\quad \sum_{n=-\infty}^{\infty}f'(x+n)\]converge  uniformly on  the closed interval $[0,1]$. Then for any $x\in \mathbb{R}$, 
\begin{equation}\label{eq230131_3}\sum_{n=-\infty}^{\infty} f(x+n)=\sum_{n=-\infty}^{\infty}\widehat{f}(n)e^{2\pi i n x},\end{equation}
where
\[\widehat{f}(n)=\int_{-\infty}^{\infty}f(x)e^{-2\pi i n x}dx.\]
\end{theorem}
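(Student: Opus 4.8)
The plan is to reduce the statement to the classical theorem on pointwise (indeed uniform) convergence of Fourier series for a continuously differentiable periodic function. Define the periodization
\[
F(x)=\sum_{n=-\infty}^{\infty}f(x+n).
\]
First I would check that $F$ is well-defined and continuously differentiable on $\mathbb{R}$: by hypothesis the series defining $F$ converges uniformly on $[0,1]$, and since each summand is $1$-periodic after re-indexing, it converges uniformly on all of $\mathbb{R}$; likewise $\sum_n f'(x+n)$ converges uniformly. A standard theorem on term-by-term differentiation of uniformly convergent series then gives that $F$ is differentiable with $F'(x)=\sum_n f'(x+n)$, and $F'$ is continuous as a uniform limit of continuous functions. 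Moreover $F(x+1)=F(x)$, so $F$ is a $C^1$ function of period $1$.

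Next I would invoke the convergence theorem for Fourier series: a $C^1$ (or even merely $C^1$ on a period, or piecewise-$C^1$ and continuous) periodic function equals its Fourier series everywhere,
\[
F(x)=\sum_{k=-\infty}^{\infty}c_k\,e^{2\pi i k x},\qquad c_k=\int_0^1 F(x)e^{-2\pi i k x}\,dx.
\]
If the paper has not already established this, it can be derived from Dirichlet-kernel estimates or from the fact that the Fourier coefficients of $F'$ are $2\pi i k\,c_k$, forcing $\sum_k|c_k|<\infty$ and hence uniform convergence of the Fourier series to a continuous periodic function that must coincide with $F$ (e.g.\ via Fej\'er's theorem or Parseval).

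Then I would identify the coefficients $c_k$ with $\widehat{f}(k)$. Using the uniform convergence of $\sum_n f(x+n)$ on $[0,1]$ to interchange sum and integral,
\[
c_k=\int_0^1\Bigl(\sum_{n=-\infty}^{\infty}f(x+n)\Bigr)e^{-2\pi i k x}\,dx
=\sum_{n=-\infty}^{\infty}\int_0^1 f(x+n)e^{-2\pi i k x}\,dx,
\]
and the substitution $u=x+n$ together with $e^{-2\pi i k(u-n)}=e^{-2\pi i k u}$ turns the $n$-th integral into $\int_n^{n+1}f(u)e^{-2\pi i k u}\,du$; summing over $n$ telescopes the domains to give $\int_{-\infty}^{\infty}f(u)e^{-2\pi i k u}\,du=\widehat{f}(k)$. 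Substituting back yields \eqref{eq230131_3}.

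The only genuine obstacle is the Fourier-series convergence input: everything else is bookkeeping with uniform convergence. Since the paper advertises an elementary and self-contained treatment, I expect the write-up either to cite a standard real-analysis fact about Fourier series of $C^1$ periodic functions or to include a short argument bounding $|c_k|\le C/k$ (from integrating by parts, using $F'$ continuous and periodic) to get absolute, hence uniform, convergence of $\sum_k c_k e^{2\pi i k x}$, and then to argue the limit equals $F$ by a density/approximation argument. I would flag that the hypothesis that $\sum f'(x+n)$ converges uniformly is exactly what is needed to make $F\in C^1$ and thus to legitimize this step.
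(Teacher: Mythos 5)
Your proposal is correct and follows essentially the same route as the paper: periodize $f$ to get a $C^1$ function $F$ of period $1$, apply the convergence theorem for Fourier series (the paper cites Dirichlet's theorem), and identify the Fourier coefficients with $\widehat{f}(n)$ by interchanging sum and integral and telescoping the domains. The details match the paper's argument step for step.
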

\begin{proof}
Define
\begin{equation}\label{eq220925_2}F(x)=\sum_{n=-\infty}^{\infty}f(x+n)\quad\text{and}\quad G(x)=\sum_{n=-\infty}^{\infty}f'(x+n).\end{equation}
Due to the assumption of uniform convergence on $[0,1]$, and the fact that $f$ and $f'$ are continuous, $F$ and $G$ are continuous functions on $[0,1]$. It is   easy to verify that  the series for $F(x)$ and $G(x)$ converge uniformly on any closed and bounded interval,  \[F(x+1)=F(x), \hspace{1cm}G(x+1)=G(x),\] and
\[F'(x)=G(x).\]
In particular, $F$ is also continuously differentiable.
Now, since $F$ is a periodic function of period 1, Dirichlet theorem for Fourier series implies that the Fourier series of $F(x)$ converges to $F(x)$. Namely,
\begin{equation}\label{eq220925_1}\sum_{n=-\infty}^{\infty}f(x+n)=F(x)=\sum_{n=-\infty}^{\infty} c_n e^{2\pi i n x},\end{equation}
where 
\[c_n=\int_0^1F(x)e^{-2\pi i n x}dx.\]
Let us  compute $c_n$ in terms of $f$. 
\[c_n=\int_0^1\sum_{k=-\infty}^{\infty}f(x+k)e^{-2\pi i nx}dx.\]Since the first series in \eqref{eq220925_2} converges uniformly, we can interchange summation and integration to obtain
\[c_n =\sum_{k=-\infty}^{\infty} \int_0^1 f(x+k)e^{-2\pi i n x}dx.\] Making a change of variables $x\mapsto x-k$, we have
\begin{align*}
c_n&=\sum_{k=-\infty}^{\infty} \int_k^{k+1} f(x)e^{-2\pi i n (x-k)}dx\\
&=\sum_{k=-\infty}^{\infty} \int_k^{k+1} f(x)e^{-2\pi i nx}dx\\
&=\int_{-\infty}^{\infty}f(x)e^{-2\pi i nx}dx.
\end{align*}This completes the proof of the theorem.
\end{proof}

We apply the Poisson summation formula to a Gaussian function. First let us verify the uniform convergence of the corresponding series.

\begin{lemma}\label{lemma220925_1}
Let $u$ be a positive number and let $b$ be any real number. Define the function   $f:\mathbb{R}\rightarrow\mathbb{R}$ by
\[f(x)=e^{-2\pi i b x}e^{-\pi u x^2}.\]
Then the two series 
\begin{equation*} F(x)=\sum_{n=-\infty}^{\infty}f(x+n)\quad\text{and}\quad G(x)=\sum_{n=-\infty}^{\infty}f'(x+n)\end{equation*}converge  uniformly on $[0,1]$.
\end{lemma}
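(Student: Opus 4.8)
The plan is to apply the Weierstrass $M$-test to both series, the only real input being an elementary estimate that decouples the $x$-dependence on $[0,1]$ from the Gaussian decay. Fix $x\in[0,1]$ and an integer $n$ with $|n|\ge 1$. Then $|x+n|\ge |n|-x\ge |n|-1\ge 0$, so $(x+n)^2\ge(|n|-1)^2$ and hence
\[
|f(x+n)|=e^{-\pi u(x+n)^2}\le e^{-\pi u(|n|-1)^2}=:M_n .
\]
First I would check that $\sum_{|n|\ge 1}M_n$ converges: re-indexing by $m=|n|-1\ge 0$ turns it into a constant multiple of $\sum_{m\ge 0}e^{-\pi u m^2}$, which is dominated term-by-term by the convergent geometric series $\sum_{m\ge 0}e^{-\pi u m}$ (since $m^2\ge m$). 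The majorants $M_n$ are independent of $x$, so the $M$-test gives uniform convergence of $F$ on $[0,1]$; the single term $n=0$ is a bounded continuous function and is irrelevant to convergence.

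For $G$ I would first compute $f'(x)=-2\pi(ib+ux)\,e^{-2\pi i b x}e^{-\pi u x^2}$, so that for $x\in[0,1]$ and $|n|\ge 1$,
\[
|f'(x+n)|\le 2\pi\bigl(|b|+u|x+n|\bigr)e^{-\pi u(x+n)^2}\le 2\pi\bigl(|b|+u(|n|+1)\bigr)e^{-\pi u(|n|-1)^2}=:N_n ,
\]
where I used $|x+n|\le |n|+1$ in the polynomial prefactor and $(x+n)^2\ge(|n|-1)^2$ in the exponential. Then I would observe that $\sum_{|n|\ge 1}N_n$ converges because, after the same re-indexing $m=|n|-1$, it is a polynomial in $m$ times $e^{-\pi u m^2}$, and any such series converges (for instance, $(\text{poly in }m)\,e^{-\pi u m^2}\le C_{\epsilon}e^{-(\pi u-\epsilon)m}$ for $m$ large, by comparison with a geometric series). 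Again the $N_n$ do not depend on $x$, so the $M$-test yields uniform convergence of $G$ on $[0,1]$, with the term $n=0$ handled separately as a bounded continuous function.

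I do not anticipate a genuine obstacle here: the proof is a routine majorant argument. The one point deserving a line of care is the inequality $(x+n)^2\ge(|n|-1)^2$ valid for all $x\in[0,1]$ and all integers $n$ with $|n|\ge 1$, since this is exactly what allows the $x$-dependence to be absorbed into an $x$-independent constant while retaining enough Gaussian decay in $n$ to dominate the polynomial factor appearing in $f'$.
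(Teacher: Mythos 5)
Your proof is correct and follows essentially the same route as the paper: a Weierstrass $M$-test with $x$-independent Gaussian majorants obtained from the lower bound $(x+n)^2\ge(|n|-1)^2$ on $[0,1]$. The only cosmetic difference is that you carry the general $b$ through the estimate for $f'$ directly (picking up the harmless $2\pi|b|$ term), whereas the paper reduces to $b=0$ and treats positive and negative $n$ separately; both work.
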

\begin{proof}It suffices to consider the case where $b=0$. 
We  prove the uniform convergence for the series $G(x)$  by applying the Weierstrass $M$-test. The proof for the series $F(x)$ is similar.

Notice that when $b=0$,
\[f'(x)=-2\pi u x e^{-\pi ux^2}.\]
For $x\in [0,1]$, when $n\geq 1$, 
\[|f'(x+n)|\leq   2\pi u (n+1) e^{-\pi un^2}\leq 2\pi u (n+1)e^{-\pi u n}.\]When $n\geq 2$, 
\[|f'(x-n)|\leq  2\pi u n e^{-\pi u(n-1)^2}\leq 2\pi u ne^{-\pi u (n-1)}.\]The two series
\[\sum_{n=1}^{\infty}   2\pi u (n+1)e^{-\pi un}
\hspace{1cm} \text{and}\hspace{1cm}\sum_{n=2}^{\infty}2\pi u ne^{-\pi u (n-1)}\] are both convergent. By Weiestrass $M$-test, we conclude the uniform convergence of the series $G(x)$  on the interval $[0,1]$.
\end{proof}

\begin{theorem}
Let $u$ be a positive number. For any real numbers $a$ and $b$,
\begin{equation}\label{eq221024_1}\sum_{n=-\infty}^{\infty} e^{-2\pi i (n+a) b}e^{-\pi u (n+a)^2}=\frac{1}{\sqrt{u}}\sum_{n=-\infty}^{\infty} e^{2\pi i n a}e^{-\pi (n+b)^2/u}.\end{equation}
\end{theorem}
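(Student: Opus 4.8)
The plan is to apply the Poisson summation formula \eqref{eq230131_3} to the function
\[f(x)=e^{-2\pi i b x}e^{-\pi u x^2},\]
evaluated at the point $x=a$. Strictly speaking \eqref{eq230131_3} was stated for real-valued $f$, but we may apply it separately to $\mathrm{Re}\,f$ and $\mathrm{Im}\,f$ and add the two identities; the hypotheses hold because $|f(x+n)|$ and $|f'(x+n)|$ do not depend on $b$, so Lemma~\ref{lemma220925_1} guarantees the uniform convergence on $[0,1]$ of $\sum_n f(x+n)$ and $\sum_n f'(x+n)$, hence of their real and imaginary parts. Poisson summation then yields
\[\sum_{n=-\infty}^{\infty}e^{-2\pi i b(n+a)}e^{-\pi u(n+a)^2}=\sum_{n=-\infty}^{\infty}f(n+a)=\sum_{n=-\infty}^{\infty}\widehat{f}(n)\,e^{2\pi i n a},\]
and the leftmost sum is precisely the left-hand side of \eqref{eq221024_1}.

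It remains to identify the Fourier coefficients. By the definition of $\widehat{f}$,
\[\widehat{f}(n)=\int_{-\infty}^{\infty}e^{-2\pi i b x}e^{-\pi u x^2}e^{-2\pi i n x}\,dx=\int_{-\infty}^{\infty}e^{-\pi u x^2}e^{-2\pi i (n+b)x}\,dx=\phi(n+b),\]
where I write $\phi(\xi)=\int_{-\infty}^{\infty}e^{-\pi u x^2}e^{-2\pi i\xi x}\,dx$. Thus the theorem reduces to the single identity
\[\phi(\xi)=\frac{1}{\sqrt{u}}\,e^{-\pi\xi^2/u}\qquad\text{for all }\xi\in\mathbb{R};\]
granting this, the right-hand side of the Poisson identity above becomes $\tfrac{1}{\sqrt u}\sum_{n}e^{2\pi i n a}e^{-\pi(n+b)^2/u}$, which is exactly the right-hand side of \eqref{eq221024_1}.

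To evaluate $\phi$ I would differentiate under the integral sign — legitimate since $\bigl|\tfrac{\partial}{\partial\xi}\bigl(e^{-\pi u x^2}e^{-2\pi i\xi x}\bigr)\bigr|=2\pi|x|e^{-\pi u x^2}$ is integrable and independent of $\xi$ — and then integrate by parts, using $-2\pi i x\,e^{-\pi u x^2}=\tfrac{i}{u}\tfrac{d}{dx}\bigl(e^{-\pi u x^2}\bigr)$ together with the vanishing of the boundary terms, to arrive at the differential equation
\[\phi'(\xi)=-\frac{2\pi\xi}{u}\,\phi(\xi),\qquad\text{so that}\qquad \phi(\xi)=\phi(0)\,e^{-\pi\xi^2/u}.\]
Finally $\phi(0)=\int_{-\infty}^{\infty}e^{-\pi u x^2}\,dx=\tfrac{1}{\sqrt u}\int_{-\infty}^{\infty}e^{-\pi t^2}\,dt=\tfrac{1}{\sqrt u}$, where the classical Gaussian integral $\int_{\mathbb{R}}e^{-\pi t^2}\,dt=1$ is obtained in the usual elementary way by squaring and passing to polar coordinates. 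Substituting back into the Poisson identity finishes the proof. The only genuine obstacle here is the evaluation of $\phi$; it can equally well be carried out by completing the square, $-\pi u x^2-2\pi i\xi x=-\pi u\bigl(x+\tfrac{i\xi}{u}\bigr)^2-\tfrac{\pi\xi^2}{u}$, and shifting the contour back to the real axis by Cauchy's theorem, but the differential-equation argument keeps the proof within real analysis.
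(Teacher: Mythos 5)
Your proposal is correct, and its skeleton coincides with the paper's: both apply the Poisson summation formula to $f(x)=e^{-2\pi i bx}e^{-\pi ux^2}$ at the point $x=a$, invoke Lemma~\ref{lemma220925_1} for the hypotheses, and reduce everything to computing $\widehat{f}(n)$. The difference lies in how the Gaussian Fourier transform is evaluated. The paper completes the square, writes the integral as $e^{-\pi(n+b)^2/u}\int e^{-\pi u(x+i(n+b)/u)^2}dx$, and shifts the contour from the line $\mathrm{Im}\,z=(n+b)/u$ down to the real axis by Cauchy's theorem; this is quick but imports complex analysis and, strictly, requires a rectangle argument with vanishing vertical sides to justify the shift. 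You instead derive the first-order ODE $\phi'(\xi)=-\tfrac{2\pi\xi}{u}\phi(\xi)$ by differentiating under the integral sign and integrating by parts, which keeps the entire computation within real analysis and arguably fits the paper's stated aim of a fully elementary treatment better. You also patch a small point the paper passes over silently: the Poisson summation theorem was stated for real-valued $f$, whereas the $f$ here is complex-valued, and your remark that one applies the theorem to $\mathrm{Re}\,f$ and $\mathrm{Im}\,f$ separately (their dominating series being independent of $b$) is exactly the right fix. Both routes are sound; yours trades a one-line contour shift for a short ODE argument and gains in elementarity and rigor of the hypotheses.
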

\begin{proof}
Let $f(x)$ be the function defined in Lemma \ref{lemma220925_1}. By the Poisson summation formula \eqref{eq230131_3}, we have
\[\sum_{n=-\infty}^{\infty} e^{-2\pi i (n+a) b}e^{-\pi u (n+a)^2}=\sum_{n=-\infty}^{\infty}\widehat{f}(n) e^{2\pi i n a}.\]
Now we only need to compute $\widehat{f}(n)$.
\begin{align*}
\widehat{f}(n)&=\int_{-\infty}^{\infty}e^{-2\pi i b x}e^{-\pi u x^2}e^{-2\pi i n x}dx\\
 &= e^{-\pi (n+b)^2/u}\int_{-\infty}^{\infty} e^{-\pi u (x+i(n+b)/u)^2}dx.
\end{align*}The integral integrates the function $e^{-\pi u z^2}$ over the closed contour ${\rm Im}\,z=(n+b)/u$. Since the function $e^{-\pi u  z^2}$ is analytic, we can shift the contour of integration to the real line ${\rm Im}\,z=0$.
This gives
\[\int_{-\infty}^{\infty} e^{-\pi u (x+i(n+b)/u)^2}dx=\int_{-\infty}^{\infty} e^{-\pi u x^2}dx=\frac{1}{\sqrt{u}}.\]Therefore,
\[\widehat{f}(n)=\frac{1}{\sqrt{u}} e^{-\pi (n+b)^2/u},\]and
the proof is completed.
\end{proof}

Let
\[D=\left\{(\tau, z, w)\in\mathbb{C}^3\,|\, \text{Im}\,\tau>0\right\}.\]
Notice that  both the series 
\[H_1(\tau,z, w)=\sum_{n=-\infty}^{\infty} e^{-2\pi i (n+z)w}e^{\pi i\tau (n+z)^2}\] and
\[H_2(\tau, z)=(-i\tau)^{-1/2}\sum_{n=-\infty}^{\infty} e^{2\pi i n z}e^{-\pi i (n+w)^2/\tau}\]
converge absolutely and uniformly on any compact subsets of $D$. Hence, both of them define analytic functions on $D$. When $\tau=iu$ with $u>0$ and $z=a$ with $a$ real, $w=b$ with $b$ real,
\eqref{eq221024_1} says that
\[H_1(iu, a, b)=H_2(iu, a, b).\]
By analytic continuation, we find that
\[H_1(\tau, z, w)=H_2(\tau, z, w)\hspace{1cm}\text{for all}\;(\tau, z, w)\in D.\]

\begin{corollary}
For any complex numbers $\tau$, $z$ and $w$ with $\text{Im}\,\tau>0$, 
\begin{equation}\label{eq230131_1}\sum_{n=-\infty}^{\infty} e^{-2\pi i  (n+z)w}e^{\pi i \tau (n+z)^2}=(-i\tau)^{-1/2}\sum_{n=-\infty}^{\infty} e^{2\pi i n z}e^{-\pi i (n+w)^2/\tau}.\end{equation}
\end{corollary}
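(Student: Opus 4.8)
The plan is to recognize \eqref{eq230131_1} as precisely the identity $H_1(\tau,z,w)=H_2(\tau,z,w)$ in the notation introduced in the paragraph preceding the statement, and to obtain it from the previously established Theorem (equation \eqref{eq221024_1}) by analytic continuation carried out one variable at a time.

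First I would check that \eqref{eq221024_1} gives exactly \eqref{eq230131_1} on the ``real slice.'' Setting $\tau=iu$ with $u>0$, $z=a\in\mathbb{R}$ and $w=b\in\mathbb{R}$, we have $\pi i\tau=-\pi u$, so $e^{\pi i\tau(n+z)^2}=e^{-\pi u(n+a)^2}$; we have $-i\tau=u$, so $(-i\tau)^{-1/2}=u^{-1/2}$; and $-\pi i/\tau=-\pi/u$, so $e^{-\pi i(n+w)^2/\tau}=e^{-\pi(n+b)^2/u}$. Substituting these into \eqref{eq221024_1} turns it term by term into \eqref{eq230131_1}. Hence the two sides of \eqref{eq230131_1} agree whenever $\tau\in i\mathbb{R}_{>0}$ and $z,w$ are real.

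Next I would record the analyticity of both sides on $D$. For the two series this is exactly the statement, quoted just before the Corollary, that they converge absolutely and uniformly on compact subsets of $D$; termwise, each summand is entire in $(\tau,z,w)$ on $D$, so by the Weierstrass theorem the sums are holomorphic on $D$. The prefactor $(-i\tau)^{-1/2}$ is holomorphic on $\mathbb{H}=\{\text{Im}\,\tau>0\}$ because $-i\tau$ has positive real part there, so the principal branch of the square root and its reciprocal are well defined and analytic. Thus $g:=H_1-H_2$ is holomorphic on $D$.

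Finally I would apply the one–variable identity theorem three times. (i) Fix $u>0$ and $b\in\mathbb{R}$; the entire function $z\mapsto g(iu,z,b)$ vanishes on $\mathbb{R}$, which has a limit point, hence vanishes identically, so $g(iu,z,b)=0$ for all $z\in\mathbb{C}$. (ii) Fix $u>0$ and $z\in\mathbb{C}$; by (i) the entire function $w\mapsto g(iu,z,w)$ vanishes on $\mathbb{R}$, hence vanishes identically, so $g(iu,z,w)=0$ for all $z,w\in\mathbb{C}$. (iii) Fix $z,w\in\mathbb{C}$; the function $\tau\mapsto g(\tau,z,w)$ is holomorphic on the connected domain $\mathbb{H}$ and vanishes on the ray $i\mathbb{R}_{>0}$, which has a limit point in $\mathbb{H}$, so it vanishes identically. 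Therefore $g\equiv 0$ on $D$, which is \eqref{eq230131_1}. The only genuine subtlety — the ``main obstacle'' such as it is — lies in the second paragraph above: making sure the termwise manipulations are legitimate, i.e.\ that both series really define holomorphic functions on $D$ and that the branch of $(-i\tau)^{1/2}$ is fixed consistently; once these facts are secured, the rest is the routine iterated use of the identity theorem. (An alternative would be to re-prove \eqref{eq230131_1} directly by completing the square and shifting a Gaussian contour as in the proof of the Theorem, now permitting complex shift parameters, but the analytic-continuation route is shorter given what is already available.)
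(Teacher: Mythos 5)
Your proposal is correct and follows essentially the same route as the paper, which likewise defines $H_1$ and $H_2$, checks their analyticity on $D$ via uniform convergence on compacta, matches them with \eqref{eq221024_1} on the real slice $\tau=iu$, $z=a$, $w=b$, and concludes by analytic continuation. Your only addition is to make the continuation step explicit by applying the one-variable identity theorem in $z$, then $w$, then $\tau$, which is a legitimate and slightly more careful rendering of the paper's one-line appeal to analytic continuation.
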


\bigskip
\section{  Transformation  Defined by the Generators of the Modular Group}
In this section, we consider the transformation of the Dedekind eta function $\eta(\tau)$ under the action of the two generators $T$ and $S$ \eqref{eq230130_6} of the modular group $\text{PSL}\,(2,\mathbb{Z})$.

 For the generator $T$, its power $T^m$ defines the transformation $\tau\mapsto \tau+m$. The transformation of $\eta(\tau)$ under the action of $T^m$ is easily deduced.

 \begin{proposition}\label{prop8_13_4}
 If $\tau\in \mathbb{H}$, and $m$ is an integer, we have
 \[\eta(\tau+m)=\exp\left(\frac{\pi i m}{12}\right)\eta(\tau).\]
 \end{proposition}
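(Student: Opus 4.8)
The plan is to argue directly from the product definition \eqref{eq3_31_8}, since the statement is essentially a bookkeeping computation. First I would substitute $\tau+m$ for $\tau$ in \eqref{eq3_31_8} to write
\[\eta(\tau+m)=e^{\pi i(\tau+m)/12}\prod_{n=1}^{\infty}\left(1-e^{2\pi i n(\tau+m)}\right),\]
and then split the exponential prefactor using $e^{\pi i(\tau+m)/12}=e^{\pi i\tau/12}\,e^{\pi i m/12}$.

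Next I would show that the infinite product is left unchanged. For each $n\geq 1$ and each integer $m$, the product $nm$ is an integer, so $e^{2\pi i nm}=1$, hence $e^{2\pi i n(\tau+m)}=e^{2\pi i n\tau}e^{2\pi i nm}=e^{2\pi i n\tau}$. Therefore every factor $1-e^{2\pi i n(\tau+m)}$ equals $1-e^{2\pi i n\tau}$, and the product over $n$ is identical to the one appearing in $\eta(\tau)$. (Since $\text{Im}\,\tau>0$ we have $|e^{2\pi i n\tau}|<1$, so the product converges absolutely and this term-by-term identification is legitimate.) Combining this with the split prefactor gives
\[\eta(\tau+m)=e^{\pi i m/12}\,e^{\pi i\tau/12}\prod_{n=1}^{\infty}\left(1-e^{2\pi i n\tau}\right)=\exp\!\left(\frac{\pi i m}{12}\right)\eta(\tau),\]
which is the claim.

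There is essentially no obstacle here: the only point requiring a word is the observation that $e^{2\pi i nm}=1$ because $nm\in\mathbb{Z}$, which is exactly the periodicity that makes the infinite product invariant under $\tau\mapsto\tau+m$. I would keep the proof to a few lines. (For $m=1$ this recovers the special case $\eta(\tau+1)=e^{\pi i/12}\eta(\tau)$ quoted in the introduction, and the general integer $m$ case then also follows by iterating that relation, but the direct computation above handles all $m\in\mathbb{Z}$ at once and is no longer.)
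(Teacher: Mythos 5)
Your proof is correct and is exactly the computation the paper has in mind: the paper states this proposition without proof, remarking only that it is ``easily deduced'' from the definition \eqref{eq3_31_8}, and your observation that $e^{2\pi i n m}=1$ for $nm\in\mathbb{Z}$ together with splitting the prefactor $e^{\pi i(\tau+m)/12}$ is the intended argument. No gaps.
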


 For the transformation of $\eta(\tau)$ under the  generator   $S$, we have the following theorem.
\begin{theorem}\label{transform_eta_S}
When $\text{Im}\,\tau>0$,
\begin{equation}\label{eq230130_7}\eta\left(-\frac{1}{\tau}\right)=(-i\tau)^{1/2}\eta(\tau).\end{equation}
\end{theorem}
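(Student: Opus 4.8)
The plan is to convert $\eta$ into a theta series by means of Euler's pentagonal number formula (already obtained from the Jacobi triple product), apply the Gaussian/theta transformation \eqref{eq230131_1} that was derived from Poisson summation, and then reassemble the resulting sum into a theta series for $\eta(-1/\tau)$.

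First I would record the theta form of $\eta$. Putting $w=e^{2\pi i\tau}$ in $\prod_{n\ge1}(1-w^n)=\sum_{k}(-1)^kw^{(3k^2-k)/2}$, multiplying by $e^{\pi i\tau/12}$, and using $3k^2-k+\tfrac1{12}=\tfrac1{12}(6k-1)^2=3\bigl(k-\tfrac16\bigr)^2$, one gets
\[\eta(\tau)=\sum_{k=-\infty}^{\infty}(-1)^k e^{\pi i\tau(6k-1)^2/12}=\sum_{k=-\infty}^{\infty}(-1)^k e^{3\pi i\tau(k-1/6)^2}.\]
Since $(-1)^k=e^{i\pi k}=e^{i\pi/6}\,e^{-2\pi i(k-1/6)(-1/2)}$, the right-hand side is exactly $e^{i\pi/6}$ times the left side of \eqref{eq230131_1} with $\tau$ replaced by $3\tau$, $z=-\tfrac16$, and $w=-\tfrac12$. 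Feeding those parameters into \eqref{eq230131_1}, and simplifying with $(k-\tfrac12)^2/(3\tau)=(2k-1)^2/(12\tau)$ and $e^{2\pi i k(-1/6)}=e^{-i\pi k/3}$, yields
\[\eta(\tau)=e^{i\pi/6}(-3i\tau)^{-1/2}\,S,\qquad S=\sum_{k=-\infty}^{\infty}e^{-i\pi k/3}e^{-\pi i(2k-1)^2/(12\tau)},\]
where $S$ converges absolutely because $\mathrm{Im}(-1/\tau)>0$.

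The heart of the argument is to show $S=\sqrt3\,e^{-i\pi/6}\,\eta(-1/\tau)$. The involution $k\mapsto1-k$ fixes $(2k-1)^2$, so I group the terms of $S$ into pairs $\{k,1-k\}$. A pair with $3\mid(2k-1)$ (equivalently $k\equiv2\pmod3$) cancels, since $e^{-i\pi k/3}+e^{-i\pi(1-k)/3}=2e^{-i\pi/6}\cos\bigl(\pi(2k-1)/6\bigr)=0$ there. The remaining pairs are $\{3m,\,1-3m\}$ with $m\in\mathbb Z$, and each contributes
\[\bigl(1+e^{-i\pi/3}\bigr)(-1)^m e^{-\pi i(6m-1)^2/(12\tau)}=\sqrt3\,e^{-i\pi/6}(-1)^m e^{-\pi i(6m-1)^2/(12\tau)};\]
summing over $m$ and recognizing the theta form of $\eta$ at $-1/\tau$ gives $S=\sqrt3\,e^{-i\pi/6}\,\eta(-1/\tau)$. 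Substituting back, $\eta(\tau)=e^{i\pi/6}(-3i\tau)^{-1/2}\sqrt3\,e^{-i\pi/6}\eta(-1/\tau)=(-i\tau)^{-1/2}\eta(-1/\tau)$, where $(-3i\tau)^{-1/2}=3^{-1/2}(-i\tau)^{-1/2}$ is legitimate on the principal branch because $-i\tau$ and $-3i\tau$ both lie in the right half-plane (or one checks this identity of analytic functions at $\tau=iu$). Rearranging gives \eqref{eq230130_7}.

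I expect the main obstacle to be the reassembly step: one must verify carefully that the residue class $k\equiv2\pmod3$ genuinely drops out and that the surviving pairs, now indexed by $m$, recombine into the alternating theta series for $\eta(-1/\tau)$ with precisely the constant $\sqrt3\,e^{-i\pi/6}$. The only other delicate point is keeping the branch of the square root consistent throughout, which is handled by confining the relevant arguments to the right half-plane and invoking analytic continuation from the ray $\tau=iu$.
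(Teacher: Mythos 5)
Your proof is correct and follows essentially the same route as the paper: both convert $\eta$ to its theta form via the pentagonal number theorem, apply the Poisson-summation identity \eqref{eq230131_1} after rescaling $\tau$ by a factor of $3$, and finish by sorting the resulting sum into residue classes mod $3$, where one class cancels by an involution and the other two combine to produce the factor $\sqrt{3}$. The only difference is directional: the paper takes the left side of \eqref{eq230131_1} to be $\eta(-1/\tau)$ and reassembles the right side into $\sqrt{3}\,\eta(\tau)$, whereas you start from $\eta(\tau)$ and reassemble the transformed sum into $\eta(-1/\tau)$.
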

There are various methods that can be used to prove this transformation formula. In \cite{Apostol_2}, this is proved using Siegel's method which employs residue calculus.  In \cite{Stein}, this was proved using the Jacobi triple product formula as well as the Poisson summation formula. However, it was first proved that
\[ \eta\left(-\frac{1}{\tau}\right)^3=(-i\tau)^{3/2}\eta(\tau)^3.\]
Here we are going  to give an alternative way to prove \eqref{eq230130_7} by first deriving the Euler pentagonal number theorem.

\begin{theorem}[Euler Pentagonal Number Theorem]\label{thm4_5_1}
When $\text{Im}\,\tau>0$,
\begin{equation}\label{eq230131_2}
\prod_{n=1}^{\infty}\left(1-e^{2\pi i n\tau}\right)=\sum_{n=-\infty}^{\infty}(-1)^ne^{\pi i (3n^2-n)\tau}=\sum_{n=-\infty}^{\infty}(-1)^ne^{\pi i (3n^2+n)\tau}.
\end{equation}

\end{theorem}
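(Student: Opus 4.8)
The plan is to derive the Euler pentagonal number theorem directly from the Jacobi triple product identity \eqref{eq230130_1} by a judicious specialization of its two parameters. Writing that identity with dummy variables $W$ and $Z$,
\[\prod_{n=1}^{\infty}\left(1-W^{2n}\right)\left(1+W^{2n-1}Z^2\right)\left(1+W^{2n-1}Z^{-2}\right)=\sum_{n=-\infty}^{\infty}W^{n^2}Z^{2n},\qquad |W|<1,\ Z\neq 0,\]
I would substitute $W=w^3$ and $Z^2=-w^{-1}$, where $w$ is a complex number with $0<|w|<1$; then $|W|=|w|^3<1$ and $Z\neq 0$, so the hypotheses are met, and since the identity depends on $Z$ only through $Z^2$ and $Z^{-2}$, it does not matter which square root of $-w^{-1}$ is chosen. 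The point of this choice is that each factor of the triple product becomes a simple binomial in $w$: one has $W^{2n}=w^{6n}$, while $W^{2n-1}Z^2=-w^{6n-4}$ and $W^{2n-1}Z^{-2}=-w^{6n-2}$.

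With this substitution the left-hand side becomes $\prod_{n=1}^{\infty}(1-w^{6n})(1-w^{6n-4})(1-w^{6n-2})$. The key combinatorial observation is that, as $n$ runs over the positive integers, the exponents $6n$, $6n-2$, $6n-4$ run exactly through the residue classes $0$, $4$, $2$ modulo $6$ among the positive integers, so together they enumerate every even positive integer exactly once; hence the left-hand side collapses to $\prod_{m=1}^{\infty}(1-w^{2m})$. The right-hand side becomes $\sum_{n=-\infty}^{\infty}w^{3n^2}(-w^{-1})^n=\sum_{n=-\infty}^{\infty}(-1)^n w^{3n^2-n}$. This yields the generating-function form of the pentagonal number theorem,
\[\prod_{m=1}^{\infty}(1-w^{2m})=\sum_{n=-\infty}^{\infty}(-1)^n w^{3n^2-n}.\]
All these rearrangements are legitimate because, for $|w|<1$, both sides of the triple product identity converge absolutely, as established in the proof of that identity.

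Finally, I would set $w=e^{\pi i\tau}$ with $\text{Im}\,\tau>0$, so that $|w|=e^{-\pi\,\text{Im}\,\tau}<1$; then $w^{2m}=e^{2\pi i m\tau}$ and $w^{3n^2-n}=e^{\pi i(3n^2-n)\tau}$, which gives the first equality in \eqref{eq230131_2}. The second equality follows by replacing the summation index $n$ by $-n$: since $(-1)^{-n}=(-1)^n$ and $3(-n)^2-(-n)=3n^2+n$, the two series coincide term by term after reindexing. I do not anticipate a serious obstacle; the only point that needs to be spelled out carefully is the verification that $\{6n,\,6n-2,\,6n-4 : n\geq 1\}$ is exactly the set of even positive integers, which is what makes the product telescope. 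An essentially equivalent alternative is to substitute $W=w^{3/2}$ and $Z^2=-w^{-1/2}$, obtaining $\prod_{n\geq1}(1-w^n)=\sum_{n}(-1)^n w^{(3n^2-n)/2}$ directly, but that route requires tracking fractional powers of $w$, so the integer-power substitution above is preferable.
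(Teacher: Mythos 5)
Your proposal is correct and is essentially the paper's own argument: the substitution $W=w^{3}$, $Z^{2}=-w^{-1}$ in the multiplicative form of the triple product is exactly the paper's replacement of $\tau$ by $3\tau$ and $z$ by $(\tau+1)/2$ in the exponential form (up to the harmless symmetry $z\mapsto-z$, which is why you land on $3n^{2}-n$ where the paper lands on $3n^{2}+n$), and both proofs hinge on the same observation that the exponents $6n,\,6n-2,\,6n-4$ enumerate the even positive integers exactly once. No gaps.
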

\begin{proof}
 Replacing $\tau$ by $3\tau$, and $z$ by $(\tau+1)/2$ in the Jacobi triple product identity \eqref{eq230131_4}, we have
\begin{align*}
\prod_{n=1}^{\infty}\left(1-e^{2\pi i (3n)\tau}\right)\left(1-e^{2\pi i (3n-1)\tau}\right)\left(1-e^{2\pi i (3n-2)\tau}\right)=\sum_{n=-\infty}^{\infty}(-1)^ne^{\pi i (3n^2+n)\tau}.
\end{align*}When $n$ runs through positive integers, $3n$, $3n-1$ and $3n-2$ also runs through positive integers. Hence,
\[\prod_{n=1}^{\infty}\left(1-e^{2\pi i (3n)\tau}\right)\left(1-e^{2\pi i (3n-1)\tau}\right)\left(1-e^{2\pi i (3n-2)\tau}\right)=\prod_{n=1}^{\infty}\left(1-e^{2\pi i n\tau}\right).\]
This completes the proof of \eqref{eq230131_2}.
\end{proof}

Using the Euler pentagonal number theorem, we can express the Dedekind eta function as a sum.
\begin{corollary}
When $\text{Im}\,\tau>0$,
\begin{equation}\label{eq230131_8}
\eta(\tau)=\sum_{n=-\infty}^{\infty}e^{\pi i n}e^{3\pi i\left(n+\frac{1}{6}\right)^2\tau}.
\end{equation}
\end{corollary}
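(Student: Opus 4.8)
The plan is to combine the product definition \eqref{eq3_31_8} of $\eta(\tau)$ with the Euler pentagonal number theorem \eqref{eq230131_2}, and then to recognize the resulting quadratic exponent as a perfect square. First I would write
\[
\eta(\tau)=e^{\pi i\tau/12}\prod_{n=1}^{\infty}\left(1-e^{2\pi i n\tau}\right)
=e^{\pi i\tau/12}\sum_{n=-\infty}^{\infty}(-1)^n e^{\pi i(3n^2+n)\tau},
\]
using the second form of \eqref{eq230131_2} (the one with $3n^2+n$ rather than $3n^2-n$). Pulling the prefactor $e^{\pi i\tau/12}$ inside the sum is legitimate because the series converges absolutely when $\text{Im}\,\tau>0$, giving
\[
\eta(\tau)=\sum_{n=-\infty}^{\infty}(-1)^n e^{\pi i\left(3n^2+n+\frac{1}{12}\right)\tau}.
\]

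Next I would complete the square in the exponent. A direct computation shows $3\left(n+\frac{1}{6}\right)^2=3n^2+n+\frac{1}{12}$, so the exponent is exactly $3\pi i\left(n+\frac{1}{6}\right)^2\tau$. Finally, since $n$ is an integer, $(-1)^n=e^{\pi i n}$, and substituting both observations yields
\[
\eta(\tau)=\sum_{n=-\infty}^{\infty}e^{\pi i n}\,e^{3\pi i\left(n+\frac{1}{6}\right)^2\tau},
\]
which is \eqref{eq230131_8}.

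There is essentially no obstacle here: the only points requiring a word of justification are the interchange of the scalar $e^{\pi i\tau/12}$ with the infinite sum (absolute convergence, already noted in the derivation of the Jacobi triple product and its corollaries), the choice of the $3n^2+n$ form of the pentagonal number theorem so that completing the square is clean, and the elementary identity $3(n+\tfrac16)^2=3n^2+n+\tfrac{1}{12}$. Everything else is algebraic bookkeeping.
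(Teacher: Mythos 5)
Your proposal is correct and matches the paper's approach exactly: the paper's proof simply states that the corollary follows directly from the definition \eqref{eq3_31_8} and the pentagonal number theorem \eqref{eq230131_2}, and your write-up supplies precisely the completing-the-square computation $3\left(n+\tfrac16\right)^2=3n^2+n+\tfrac{1}{12}$ and the identification $(-1)^n=e^{\pi i n}$ that the paper leaves implicit.
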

\begin{proof}
This follows directly from the definition of $\eta(\tau)$ \eqref{eq3_31_8} and the Euler pentagonal number theorem \eqref{eq230131_2}.
\end{proof}

Using the identity \eqref{eq230131_1}, we can now prove the transformation formula \eqref{eq230130_7}.

\begin{proof}[Proof of Theorem \ref{transform_eta_S}]
Replacing $\tau$ by $\tau/3$, and set $z=1/2$, $w=1/6$ in \eqref{eq230131_1}, we have
\begin{equation}\label{eq230131_5} \sum_{n=-\infty}^{\infty} e^{\pi i n}e^{-3\pi i  (n+1/6)^2/\tau}=\frac{(-i\tau)^{1/2}}{\sqrt{3}}\sum_{n=-\infty}^{\infty} e^{-\pi i n/3}e^{-\pi i /6}e^{\pi i \tau(n+1/2)^2/3 }.\end{equation}
By \eqref{eq230131_8}, the left hand side of \eqref{eq230131_5} is 
$\eta(-1/\tau)$. For the right hand side, notice that when $n$ runs through all integers, $3n$, $3n-1$ and $3n+1$ together also run through all integers. Therefore,
\begin{equation}\label{eq230131_6}\begin{split}
&\sum_{n=-\infty}^{\infty} e^{-\pi i n/3}e^{-\pi i /6}e^{\pi i \tau(n+1/2)^2/3 }\\&=
\sum_{n=-\infty}^{\infty} e^{-\pi i n }e^{-\pi i /6}e^{\pi i \tau(3n+1/2)^2/3 }+
\sum_{n=-\infty}^{\infty} e^{-\pi i n}e^{\pi i /6}e^{\pi i \tau(3n-1/2)^2/3 }\\&\quad+\sum_{n=-\infty}^{\infty} e^{-\pi i n}e^{-\pi i /2}e^{\pi i \tau(3n+3/2)^2/3 }.\end{split}
\end{equation}Notice that the first two terms on the right hand side of \eqref{eq230131_6} give
\begin{equation}\label{eq230131_7}\begin{split}
&\sum_{n=-\infty}^{\infty} e^{-\pi i n }e^{-\pi i /6}e^{\pi i \tau(3n+1/2)^2/3 }+
\sum_{n=-\infty}^{\infty} e^{-\pi i n}e^{\pi i /6}e^{\pi i \tau(3n-1/2)^2/3 }\\
&=2\cos\frac{\pi}{6}\sum_{n=-\infty}^{\infty}(-1)^n e^{3\pi i \tau(n+1/6)^2}\\
&=\sqrt{3}\eta(\tau).
\end{split}\end{equation}The last term on the right hand side of \eqref{eq230131_6} is
\begin{align*}
I=-i\sum_{n=-\infty}^{\infty} (-1)^ne^{3\pi i \tau(n+1/2)^2}.
\end{align*}When $n$ runs through all integers, $-1-n$ also runs through all integers. We then find that
\begin{align*}
I=-i\sum_{n=-\infty}^{\infty}(-1)^{n+1}e^{3\pi i \tau(-n-1/2)^2}=i\sum_{n=-\infty}^{\infty} (-1)^ne^{3\pi i \tau(n+1/2)^2}=-I.
\end{align*}Thus $I=0$, and we conclude from \eqref{eq230131_5}, \eqref{eq230131_6} and \eqref{eq230131_7} that
\[\eta\left(1/\tau\right)=(-i\tau)^{1/2}\eta(\tau).\]

\end{proof}
 
\begin{remark}
Define the function $\chi:\mathbb{Z}\rightarrow \mathbb{C}$ by
\[\chi(1)=\chi(11)=1,\] \[\chi(5)=\chi(7)=-1, \]\[\chi(2)=\chi(3)=\chi(4)=\chi(6)=\chi(8)=\chi(9)=\chi(10)=\chi(12)=0,\]
and
\[\chi(n+12)=\chi(n)\hspace{1cm}\text{for all}\;n\in\mathbb{Z}.\]
Then $\chi(n)$ is a Dirichlet character modulo 12. It satisfies the multiplicativity property:
\[\chi(mn)=\chi(m)\chi(n)\hspace{1cm}\text{for all}\;m,n\in\mathbb{Z}.\]
As $n$ runs through all integers, $2n$ and $2n+1$ together also runs through all integers.  The formula for $\eta(\tau)$ \eqref{eq230131_8} shows that
\begin{align*}
\eta(\tau)&=\sum_{n=-\infty}^{\infty}e^{\pi i\tau(12n+1)^2/12}-\sum_{n=-\infty}^{\infty}e^{\pi i\tau(12n+7)^2/12}\\
&=\frac{1}{2}\left\{\sum_{n=-\infty}^{\infty}\chi(12n+1)e^{\pi i\tau(12n+1)^2/12}+\sum_{n=-\infty}^{\infty}\chi(12n-1)e^{\pi i\tau(12n-1)^2/12}\right.\\
&\quad+\left.\sum_{n=-\infty}^{\infty}\chi(12n+7)e^{\pi i\tau(12n+7)^2/12}+\sum_{n=-\infty}^{\infty}\chi(12n-7)e^{\pi i\tau(12n-7)^2/12}\right\}\\
&=\frac{1}{2}\sum_{m=1}^{12}\chi(m)\sum_{n=-\infty}^{\infty} e^{\pi i \tau (12n+m)^2/12}\\
&=\frac{1}{2}\sum_{n=-\infty}^{\infty}\chi(n)e^{\pi i \tau n^2/12}.
\end{align*}
\end{remark}
Using this, we can give yet another  proof of Theorem \ref{transform_eta_S}.
\begin{proof}[Second proof of Theorem \ref{transform_eta_S}]
One can easily verify that for all integer $n$,
\[\chi(n)=\frac{1}{\sqrt{12}}\sum_{m=1}^{12}\chi(m)e^{\frac{2\pi i mn}{12}}.\]
Therefore,
\begin{align*}
\eta(-1/\tau)=\frac{1}{2\sqrt{12}}\sum_{m=1}^{12}\chi(m)\sum_{n=-\infty}^{\infty}e^{\frac{2\pi i mn}{12}}e^{-\pi i   n^2/(12\tau)}.
\end{align*}
For each $1\leq m\leq 12$, using \eqref{eq230131_1} with $\tau$ replaced by $12\tau$, $z=m/12$ and $w=0$, we find that
\begin{align*}
\eta(-1/\tau)&=\frac{(-i\tau)^{1/2}}{2}\sum_{m=1}^{12}\chi(m)\sum_{n=-\infty}^{\infty}e^{12\pi i \tau (n+m/12)^2}\\
&=\frac{(-i\tau)^{1/2}}{2}\sum_{m=1}^{12}\chi(m)\sum_{n=-\infty}^{\infty}e^{ \pi i \tau (12n+m)^2/12}\\
&=(-i\tau)^{1/2}\eta(\tau).
\end{align*}This is just a special case of a more general transformation formula for the theta function associated with Dirichlet characters \cite{Montgomery}.
\end{proof}
\section{The Dedekind Sums}
For the transformation formula for $\eta$ under a general element of the modular group, we first define the Dedekind sum.

If $h$ is an integer, $k$ is a positive integer larger than 1, the Dedekind sum $s(h,k)$ is defined as
\begin{gather}\label{eq8_13_1}
s(h,k)=\sum_{r=1}^{k-1}\frac{r}{k}\left(\frac{hr}{k}-\left\lfloor\frac{hr}{k}\right\rfloor-\frac{1}{2}\right).
\end{gather}When $k=1$,   
we define $s(h,1)=0$ for any integer $h$. 

The Dedekind sums have the following properties.
\begin{lemma}\label{lemma8_13_9} Let $k$ be a positive integer, and let $h$ and $h'$ be integers relatively prime to $k$. 
If $h\equiv h'\mod k$, then
\[s(h,k)=s(h',k).\]
\end{lemma}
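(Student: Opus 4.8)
The plan is to deduce this directly from the fact that each summand in the definition \eqref{eq8_13_1} of $s(h,k)$ depends on $h$ only through the residue class of $h$ modulo $k$. The case $k=1$ is immediate since $s(h,1)=s(h',1)=0$ by convention, so I would assume $k>1$ throughout.

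First I would record the elementary fact about the floor function: for any integer $m$, write $m=qk+s$ with $q\in\mathbb{Z}$ and $0\le s\le k-1$ (Euclidean division by $k$). Then $\di\frac{m}{k}=q+\frac{s}{k}$ with $0\le\frac{s}{k}<1$, so $\di\left\lfloor\frac{m}{k}\right\rfloor=q$ and hence $\di\frac{m}{k}-\left\lfloor\frac{m}{k}\right\rfloor=\frac{s}{k}$. In particular, if $m\equiv m'\mod k$, then $m$ and $m'$ have the same remainder $s$ upon division by $k$, and therefore $\di\frac{m}{k}-\left\lfloor\frac{m}{k}\right\rfloor=\frac{m'}{k}-\left\lfloor\frac{m'}{k}\right\rfloor$.

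Next I would apply this with $m=hr$ and $m'=h'r$ for each $r\in\{1,\dots,k-1\}$. Since $h\equiv h'\mod k$, multiplying by $r$ gives $hr\equiv h'r\mod k$, so by the previous paragraph $\di\frac{hr}{k}-\left\lfloor\frac{hr}{k}\right\rfloor-\frac12=\frac{h'r}{k}-\left\lfloor\frac{h'r}{k}\right\rfloor-\frac12$ for every such $r$. Multiplying each of these equalities by $\di\frac{r}{k}$ and summing over $r$ from $1$ to $k-1$ yields $s(h,k)=s(h',k)$, which is the claim.

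There is no real obstacle here: the only point that needs (a one-line) verification is the behavior of the floor function under congruences, carried out above. I note that the hypothesis $\gcd(h,k)=\gcd(h',k)=1$ plays no role in this particular statement and is retained only for uniformity with the rest of the section, where coprimality is genuinely used.
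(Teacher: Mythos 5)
Your proof is correct and follows essentially the same route as the paper's: both arguments reduce to the observation that the fractional part $\frac{hr}{k}-\left\lfloor\frac{hr}{k}\right\rfloor$ depends only on the residue of $hr$ modulo $k$ (the paper writes $h'=km+h$ and uses $\lfloor mr+x\rfloor=mr+\lfloor x\rfloor$, while you invoke Euclidean division directly, which amounts to the same computation). Your side remark that coprimality is not actually needed here is also accurate.
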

\begin{proof}
The statement is obvious if $k=1$. If $k>1$, there is an integer $m$ such that
\[h'=km+h.\]
Then for any integer $r$,
\begin{equation*}
\begin{split}
\frac{h'r}{k}-\left\lfloor\frac{h'r}{k}\right\rfloor&=\frac{(km+h)r}{k}-\left\lfloor\frac{(km+h)r}{k}\right\rfloor\\
&=mr+\frac{hr}{k}-\left\lfloor mr+\frac{hr}{k}\right\rfloor\\
&=mr+\frac{hr}{k}-mr-\left\lfloor  \frac{hr}{k}\right\rfloor\\
&=\frac{hr}{k}-\left\lfloor\frac{hr}{k}\right\rfloor.
\end{split}
\end{equation*}It follows from the definition \eqref{eq8_13_1} that
\[s(h',k)=s(h,k).\]
\end{proof}
There is a simple relation between $s(h,k)$ and $s(-h,k)$.

\begin{lemma}\label{lemma8_13_1}
If $k$ is a positive integer, and $h$ is an integer relatively prime to $k$, then
\[ s(-h,k)=-s(h,k).\]
\end{lemma}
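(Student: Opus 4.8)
The plan is to reduce the identity to a pointwise statement about each summand in the definition \eqref{eq8_13_1}. The case $k=1$ is immediate since both sides are $0$ by convention, so assume $k>1$. The essential input is the hypothesis $\gcd(h,k)=1$: for every $r$ with $1\le r\le k-1$ the rational number $hr/k$ fails to be an integer, so its fractional part $\{hr/k\}=\frac{hr}{k}-\lfloor\frac{hr}{k}\rfloor$ lies strictly between $0$ and $1$. (This is precisely where relative primality is needed; if some term had $hr/k\in\mathbb{Z}$ the negation below would break.)

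First I would record the elementary fact that for a real number $x\notin\mathbb{Z}$ one has $\lfloor -x\rfloor=-\lfloor x\rfloor-1$, equivalently $\{-x\}=1-\{x\}$. Applying this with $x=hr/k$ for each $1\le r\le k-1$ yields
\[
\frac{-hr}{k}-\left\lfloor\frac{-hr}{k}\right\rfloor-\frac12
=\bigl(1-\{hr/k\}\bigr)-\frac12
=\frac12-\left\{\frac{hr}{k}\right\}
=-\left(\frac{hr}{k}-\left\lfloor\frac{hr}{k}\right\rfloor-\frac12\right).
\]
Thus each summand of $s(-h,k)$ is exactly the negative of the corresponding summand of $s(h,k)$.

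Finally I would multiply both sides by $r/k$ and sum over $r=1,\dots,k-1$, obtaining
\[
s(-h,k)=\sum_{r=1}^{k-1}\frac{r}{k}\left(\frac{-hr}{k}-\left\lfloor\frac{-hr}{k}\right\rfloor-\frac12\right)
=-\sum_{r=1}^{k-1}\frac{r}{k}\left(\frac{hr}{k}-\left\lfloor\frac{hr}{k}\right\rfloor-\frac12\right)=-s(h,k),
\]
which is the claimed identity. There is no real obstacle here beyond bookkeeping with the floor function; the one point that must not be glossed over is the invocation of $\gcd(h,k)=1$ to ensure no term sits at an integer, since the floor reflection formula $\lfloor -x\rfloor=-\lfloor x\rfloor-1$ is valid only off $\mathbb{Z}$.
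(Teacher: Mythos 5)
Your proof is correct and follows essentially the same route as the paper: both arguments reduce the identity to the pointwise fact that the fractional part of $-hr/k$ equals $1$ minus the fractional part of $hr/k$ (the paper phrases this via residues $r'$ with $hr\equiv r'\bmod k$, you via the floor reflection formula off $\mathbb{Z}$), and both correctly use $\gcd(h,k)=1$ to rule out integer values of $hr/k$. No gaps.
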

\begin{proof}For each $1\leq r\leq k-1$, there is an $r'$ such that $1\leq r'\leq k-1$ and
\[hr\equiv r'\mod k.\]
This implies that  
\[
\frac{hr}{k}-\left\lfloor\frac{hr}{k}\right\rfloor=\frac{r'}{k},
\]
Since
\[-hr\equiv k-r' \mod k,\]
and $1\leq k-r'\leq k-1$, we have
\[
\frac{-hr}{k}-\left\lfloor\frac{-hr}{r}\right\rfloor=\frac{k-r'}{k}=1-\frac{r'}{k}.
\]
Therefore,
\begin{equation*}
\begin{split}
s(-h,k)&=\sum_{r=1}^{k-1}\frac{r}{k}\left(\frac{-hr}{k}-\left\lfloor\frac{-hr}{k}\right\rfloor-\frac{1}{2}\right)\\
&=\sum_{r=1}^{k-1}\frac{r}{k}\left(\frac{1}{2}-\frac{r'}{k} \right)\\
&=-\sum_{r=1}^{k-1}\frac{r}{k}\left(\frac{hr}{k}-\left\lfloor\frac{hr}{k}\right\rfloor -\frac{1}{2}\right)\\
&=-s(h,k).
\end{split}
\end{equation*}

\end{proof}

Next we present some lemmas that will help us to establish the reciprocity relation between $s(h,k)$ and $s(k,h)$ when $h$ and $k$ are positive integers.
\begin{lemma}\label{lemma8_13_2}
If $k$ is a positive integer, $h$ is an integer relative prime to $k$, then
\[\sum_{r=1}^{k-1}\left\lfloor\frac{hr}{k}\right\rfloor=\frac{(h-1)(k-1)}{2}.\]
\end{lemma}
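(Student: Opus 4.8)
The plan is to evaluate $\sum_{r=1}^{k-1}\frac{hr}{k}$ in two different ways. First, factoring out the constant and using $\sum_{r=1}^{k-1}r=\frac{k(k-1)}{2}$ gives
\[\sum_{r=1}^{k-1}\frac{hr}{k}=\frac{h}{k}\cdot\frac{k(k-1)}{2}=\frac{h(k-1)}{2}.\]
Second, writing each term as its integer part plus its fractional part, $\frac{hr}{k}=\left\lfloor\frac{hr}{k}\right\rfloor+\left(\frac{hr}{k}-\left\lfloor\frac{hr}{k}\right\rfloor\right)$, gives
\[\sum_{r=1}^{k-1}\frac{hr}{k}=\sum_{r=1}^{k-1}\left\lfloor\frac{hr}{k}\right\rfloor+\sum_{r=1}^{k-1}\left(\frac{hr}{k}-\left\lfloor\frac{hr}{k}\right\rfloor\right).\]
So the lemma will follow once I show that the second sum on the right equals $\frac{k-1}{2}$.

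To compute that sum I would observe that $\frac{hr}{k}-\left\lfloor\frac{hr}{k}\right\rfloor=\frac{\rho(r)}{k}$, where $\rho(r)\in\{0,1,\dots,k-1\}$ denotes the least nonnegative residue of $hr$ modulo $k$. Since $\gcd(h,k)=1$ and $1\le r\le k-1$, we have $hr\not\equiv 0\pmod{k}$, so $\rho(r)\in\{1,\dots,k-1\}$. Moreover $r\mapsto\rho(r)$ is a permutation of $\{1,\dots,k-1\}$: if $\rho(r)=\rho(r')$ then $h(r-r')\equiv 0\pmod{k}$, hence $r\equiv r'\pmod{k}$ because $h$ is prime to $k$, forcing $r=r'$; an injective self-map of a finite set is a bijection. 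Consequently $\sum_{r=1}^{k-1}\frac{\rho(r)}{k}=\sum_{s=1}^{k-1}\frac{s}{k}=\frac{k-1}{2}$, and combining this with the two evaluations above yields $\sum_{r=1}^{k-1}\left\lfloor\frac{hr}{k}\right\rfloor=\frac{h(k-1)}{2}-\frac{k-1}{2}=\frac{(h-1)(k-1)}{2}$. The degenerate case $k=1$ is immediate since both sides vanish.

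I do not expect a genuine obstacle here; the only place to be a little careful is the permutation claim, which rests on the invertibility of multiplication by $h$ modulo $k$. That step is insensitive to the sign of $h$, so the argument works for any integer $h$ coprime to $k$, not merely positive ones. Everything else is the standard ``integer part equals value minus fractional part'' bookkeeping, and no tool beyond elementary arithmetic is needed.
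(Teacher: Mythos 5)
Your argument is correct and is essentially the paper's own proof: both write $\left\lfloor hr/k\right\rfloor$ as $hr/k$ minus its fractional part $r'/k$, where $r'$ is the least positive residue of $hr$ modulo $k$, and use the fact that $r\mapsto r'$ permutes $\{1,\dots,k-1\}$ because $h$ is invertible modulo $k$. You merely spell out the permutation claim in more detail than the paper does; no further changes are needed.
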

\begin{proof}
As in the proof of Lemma \ref{lemma8_13_1}, for each $1\leq r\leq k-1$, there is an $r'$ such that $1\leq r'\leq k-1$ and
\[hr\equiv r'\mod k.\]
This implies that  
\[
\frac{hr}{k}-\left\lfloor\frac{hr}{k}\right\rfloor=\frac{r'}{k},
\]
As $r$ runs through the integers from 1 to $k-1$, $r'$ also runs through the integers from 1 to $k-1$.
Therefore,
\begin{equation*}
\begin{split}
\sum_{r=1}^{k-1}\left\lfloor\frac{hr}{k}\right\rfloor&=\sum_{r=1}^{k-1}\frac{hr}{k}-\sum_{r'=1}^{k-1}\frac{r'}{k}\\
&=\frac{(h-1)(k-1)}{2}.
\end{split}
\end{equation*}
\end{proof}
\begin{lemma}\label{lemma8_13_3}
 If $h$ and $k$ are positive integers with $(h,k)=1$, then 
 \begin{equation*}
 \sum_{r=1}^{k-1}\left(\left\lfloor\frac{hr}{k}\right\rfloor\right)^2=2hs(k,h)+\frac{(2hk-3h-k+3)(h-1)}{6}.
 \end{equation*}
\end{lemma}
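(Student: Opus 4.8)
The plan is to evaluate the left-hand side by sorting its terms according to the common value of the floor function; this will bring in the Dedekind sum $s(k,h)$ in a natural way. Throughout write $q_r=\left\lfloor hr/k\right\rfloor$. If $h=1$ the sum on the left is empty and both sides equal $0$, so I may assume $h\ge 2$; then, as $r$ runs over $\{1,\dots,k-1\}$, the integer $q_r$ takes values in $\{0,1,\dots,h-1\}$. The first step is to rewrite
\[
\sum_{r=1}^{k-1}q_r^{\,2}=\sum_{q=1}^{h-1}q^{2}N_q,\qquad N_q=\#\{\,r\in\{1,\dots,k-1\}:q_r=q\,\},
\]
the value $q=0$ contributing nothing. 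Since $q_r=q$ is equivalent to $qk/h\le r<(q+1)k/h$, the number $N_q$ is simply the number of integers in that interval, which I would express through the quantities $m_q:=\left\lfloor qk/h\right\rfloor$ (with $m_0=0$ and $m_h=k$). Here the hypothesis $(h,k)=1$ enters: for $1\le q\le h-1$ the left endpoint $qk/h$ is not an integer, so for $1\le q\le h-2$ one gets $N_q=m_{q+1}-m_q$, while for $q=h-1$ the right endpoint is the integer $k$, which produces the single correction $N_{h-1}=m_h-m_{h-1}-1$. Collecting terms,
\[
\sum_{r=1}^{k-1}q_r^{\,2}=\sum_{q=1}^{h-1}q^{2}(m_{q+1}-m_q)-(h-1)^{2}.
\]

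Next I would apply summation by parts to $\sum_{q=1}^{h-1}q^{2}(m_{q+1}-m_q)$; shifting the index and using $m_0=0$, $m_h=k$ turns it into $(h-1)^{2}k-\sum_{q=1}^{h-1}(2q-1)m_q$, so that
\[
\sum_{r=1}^{k-1}q_r^{\,2}=(h-1)^{2}(k-1)-\sum_{q=1}^{h-1}(2q-1)\left\lfloor \frac{qk}{h}\right\rfloor .
\]
It then remains to evaluate $\sum_{q=1}^{h-1}(2q-1)m_q=2\sum_{q=1}^{h-1}q\,m_q-\sum_{q=1}^{h-1}m_q$. The term $\sum_{q=1}^{h-1}\left\lfloor qk/h\right\rfloor=\tfrac12(h-1)(k-1)$ is precisely Lemma~\ref{lemma8_13_2} with the roles of $h$ and $k$ interchanged. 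For $\sum_{q=1}^{h-1}q\,m_q$ I would start from the definition of the Dedekind sum, writing
\[
s(k,h)=\frac1h\sum_{q=1}^{h-1}q\left(\frac{kq}{h}-m_q\right)-\frac{1}{2h}\sum_{q=1}^{h-1}q,
\]
solve for $\sum_{q=1}^{h-1}q\,m_q$, and insert the closed forms $\sum_{q=1}^{h-1}q=\tfrac12 h(h-1)$ and $\sum_{q=1}^{h-1}q^{2}=\tfrac16 h(h-1)(2h-1)$; this gives $\sum_{q=1}^{h-1}q\,m_q$ as an explicit polynomial in $h,k$ together with the term $-h\,s(k,h)$.

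Finally I would substitute these two evaluations into the displayed formula for $\sum_{r}q_r^{\,2}$. After the Dedekind-sum contribution is collected as $2h\,s(k,h)$, the remaining part factors a common $h-1$ (the case $h=1$ having already been handled), and the identity reduces to the polynomial identity
\[
6(h-1)(k-1)-2k(2h-1)+3h+3(k-1)=2hk-3h-k+3,
\]
which is checked by expanding both sides; this would yield exactly $2h\,s(k,h)+\tfrac16(2hk-3h-k+3)(h-1)$. The only genuinely delicate point is the boundary bookkeeping for $N_q$: the off-by-one correction at $q=h-1$ and the use of $(h,k)=1$ to ensure that the endpoints $qk/h$ are non-integral for $1\le q\le h-1$. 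Everything after that — the summation by parts and the concluding algebra — is routine.
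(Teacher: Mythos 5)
Your proof is correct, and it takes a genuinely different route from the paper's. The paper starts from $\left\lfloor hr/k\right\rfloor^2=\left\lfloor hr/k\right\rfloor\left(\left\lfloor hr/k\right\rfloor+1\right)-\left\lfloor hr/k\right\rfloor=2\sum_{s=1}^{\left\lfloor hr/k\right\rfloor}s-\left\lfloor hr/k\right\rfloor$ and reads the resulting double sum as a weighted count of lattice points $(r,s)$ with $1\le r\le k-1$, $1\le s\le h-1$, $ks\le hr$; since $(h,k)=1$ puts no lattice point on the line $ky=hx$, complementing within the rectangle converts this into $\sum_{s=1}^{h-1}s\left\lfloor ks/h\right\rfloor$, from which $s(k,h)$ drops out. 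You instead partition $\{1,\dots,k-1\}$ into the level sets of $r\mapsto\left\lfloor hr/k\right\rfloor$, compute the multiplicities $N_q$ via $m_q=\left\lfloor qk/h\right\rfloor$ (coprimality ensuring the endpoints $qk/h$ are non-integral for $1\le q\le h-1$, with the single correction $N_{h-1}=m_h-m_{h-1}-1$), and then Abel-sum. The two arguments are close cousins: summation by parts over level sets is the discrete counterpart of the paper's reflection of the staircase count, and both funnel through $\sum_q q\left\lfloor qk/h\right\rfloor$, Lemma~\ref{lemma8_13_2} with the roles of $h$ and $k$ interchanged, and the definition of $s(k,h)$. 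Yours makes the boundary bookkeeping fully explicit where the paper absorbs it into the choice of strict versus non-strict inequalities on the line, at the cost of a separate (trivial) case $h=1$ and an index-shift computation; the paper's version is slightly shorter and more pictorial. I checked your closing identity $6(h-1)(k-1)-2k(2h-1)+3h+3(k-1)=2hk-3h-k+3$ (expanding the left side gives $6hk-6h-6k+6-4hk+2k+3h+3k-3=2hk-3h-k+3$), so the algebra does close. One cosmetic slip: for $h=1$ the left-hand sum is not empty but has $k-1$ terms, each equal to $0$; the conclusion that both sides vanish is of course unaffected.
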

\begin{proof}Using Lemma \ref{lemma8_13_2}, we find that 
\begin{equation*}
\begin{split}
\sum_{r=1}^{k-1}\left(\left\lfloor\frac{hr}{k}\right\rfloor\right)^2&=\sum_{r=1}^{k-1}\left\lfloor\frac{hr}{k}\right\rfloor\left(\left\lfloor\frac{hr}{k}\right\rfloor+1\right)-
\sum_{r=1}^{k-1}\left\lfloor\frac{hr}{k}\right\rfloor\\
&=2\sum_{r=1}^{k-1}\sum_{s=1}^{ \left\lfloor\frac{hr}{k}\right\rfloor}s-\frac{(h-1)(k-1)}{2}.\end{split}
\end{equation*}
Now we consider the lattice points $(r,s)$ with $1\leq r\leq k-1$ and $1\leq s\leq h-1$. Since $h$ and $k$ are relatively prime, none of these points lie on the line $hx=ky$. 
Hence,
\begin{equation*}
\begin{split}
 \sum_{r=1}^{k-1}\sum_{s=1}^{ \left\lfloor\frac{hr}{k}\right\rfloor}s
&= \sum_{\substack{ 1\leq r\leq k-1, \;1\leq s\leq h-1\\ks\leq hr}}s\\
&= \sum_{ 1\leq r\leq k-1, \;1\leq s\leq h-1  }s- \sum_{\substack{ 1\leq r\leq k-1, \;1\leq s\leq h-1\\ hr\leq ks}}s.
\end{split}
\end{equation*}
It follows that
\begin{equation*}
\begin{split}
\sum_{r=1}^{k-1}\left(\left\lfloor\frac{hr}{k}\right\rfloor\right)^2
&= (k-1)(h-1)h -2\sum_{s=1}^{h-1}\sum_{r=1}^{\left\lfloor\frac{ks}{h}\right\rfloor}s-\frac{(h-1)(k-1)}{2}\\
&=\frac{ (k-1)(h-1)(2h-1)}{2}-2\sum_{s=1}^{h-1}s\left\lfloor\frac{ks}{h}\right\rfloor\\
&=2h\sum_{s=1}^{h-1}\frac{s}{h}\left(\frac{ks}{h}-\left\lfloor\frac{ks}{h}\right\rfloor-\frac{1}{2}\right)-2h\sum_{s=1}^{h-1}\frac{s}{h}\left(\frac{ks}{h} -\frac{1}{2}\right)\\&\quad+\frac{ (k-1)(h-1)(2h-1)}{2}\\
&=2hs(k,h)+\frac{(2hk-3h-k+3)(h-1)}{6}.
\end{split}
\end{equation*}
\end{proof}

Now, we can establish the reciprocity law for Dedekind sums. 
\begin{theorem} [Reciprocity Law for Dedekind Sums]\label{thm8_13_8} If $h$ and $k$ are positive integers with $(h,k)=1$, then 
\[s(h,k)+s(k,h)=\frac{h^2+k^2-3hk+1}{12hk}.\]
\end{theorem}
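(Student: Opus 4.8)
The plan is to evaluate the sum $\sum_{r=1}^{k-1}\left\lfloor hr/k\right\rfloor^2$ a second time, now directly in terms of $s(h,k)$ rather than $s(k,h)$, and then compare with Lemma \ref{lemma8_13_3}. Subtracting the two evaluations will leave a linear relation involving $s(h,k)+s(k,h)$, and the reciprocity law will drop out after a routine polynomial simplification in which all the higher-degree terms cancel.

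First I would record the auxiliary evaluation. As in the proof of Lemma \ref{lemma8_13_1}, for each $r$ with $1\le r\le k-1$ let $r'$ be the unique integer with $1\le r'\le k-1$ and $hr\equiv r'\bmod k$; then $\left\lfloor hr/k\right\rfloor=(hr-r')/k$, and $r\mapsto r'$ is a permutation of $\{1,\dots,k-1\}$, so $\sum r'^2=\sum r^2$. Squaring and summing gives
\[\sum_{r=1}^{k-1}\left\lfloor\frac{hr}{k}\right\rfloor^2=\frac{1}{k^2}\sum_{r=1}^{k-1}(hr-r')^2=\frac{h^2+1}{k^2}\sum_{r=1}^{k-1}r^2-\frac{2h}{k^2}\sum_{r=1}^{k-1}rr'.\]
The cross term is handled by returning to the definition \eqref{eq8_13_1}: since $r'=hr-k\left\lfloor hr/k\right\rfloor$, one has $\sum rr'=h\sum r^2-k\sum r\left\lfloor hr/k\right\rfloor$, while the definition of $s(h,k)$ rearranges to $k\sum r\left\lfloor hr/k\right\rfloor=h\sum r^2-\frac{k}{2}\sum r-k^2 s(h,k)$; combining these yields $\sum_{r=1}^{k-1}rr'=\frac{k}{2}\sum_{r=1}^{k-1}r+k^2 s(h,k)$. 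Substituting $\sum_{r=1}^{k-1}r=\frac{k(k-1)}{2}$ and $\sum_{r=1}^{k-1}r^2=\frac{k(k-1)(2k-1)}{6}$ then gives
\[\sum_{r=1}^{k-1}\left\lfloor\frac{hr}{k}\right\rfloor^2=\frac{(h^2+1)(k-1)(2k-1)}{6k}-\frac{h(k-1)}{2}-2h\,s(h,k).\]

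Next I would equate this with the formula of Lemma \ref{lemma8_13_3}, obtaining
\[2h\bigl(s(h,k)+s(k,h)\bigr)=\frac{(h^2+1)(k-1)(2k-1)}{6k}-\frac{h(k-1)}{2}-\frac{(2hk-3h-k+3)(h-1)}{6}.\]
Multiplying through by $12hk$ and dividing by $2h$, the right-hand side becomes the polynomial $(h^2+1)(k-1)(2k-1)-3hk(k-1)-k(2hk-3h-k+3)(h-1)$; on expanding, the quartic and cubic terms all cancel and what remains is exactly $h^2+k^2-3hk+1$. Dividing by $12hk$ gives $s(h,k)+s(k,h)=\frac{h^2+k^2-3hk+1}{12hk}$, as claimed. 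The case $h=1$ (or $k=1$) is covered directly by $s(h,1)=0$ and the formula $s(1,k)=\frac{(k-1)(k-2)}{12k}$, which can also be used as a sanity check.

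The work here is essentially bookkeeping rather than conceptual. The one place to be careful is the treatment of the cross term $\sum rr'$: the quantity $\sum r\left\lfloor hr/k\right\rfloor$ must be eliminated by using the definition of $s(h,k)$ itself, so one has to make sure the substitution is done consistently and not circularly. After that, the only real hazard is a sign error in the final cancellation, which is why I would verify the collapse of the polynomial against a small explicit case.
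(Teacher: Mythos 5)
Your proof is correct and follows essentially the same route as the paper's: both hinge on Lemma \ref{lemma8_13_3} together with the fact that $r\mapsto r'$ permutes $\{1,\dots,k-1\}$, and differ only in whether one expands $\sum_r\left\lfloor hr/k\right\rfloor^2$ directly in terms of $s(h,k)$ (as you do) or expands $\sum_r\left(hr/k-\left\lfloor hr/k\right\rfloor\right)^2$ (as the paper does) --- an algebraically equivalent rearrangement of the same comparison. The cross-term elimination and the final polynomial cancellation both check out, so there is nothing to fix.
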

\begin{proof}
Since there is a symmetry in $h$ and $k$, we can assume that $h\geq k$. 
It is easy to check that the formula is true when $h=k=1$. When $k=1$ and $h>1$,
\begin{equation*}
\begin{split}
s(h,k)+s(k,h)&=s(1,h)\\
&=\sum_{r=1}^{h-1}\frac{r}{h}\left(\frac{r}{h}-\frac{1}{2}\right)\\
&=\frac{h^2-3h+2}{12h}\\
&=\frac{h^2+k^2-3hk+1}{12hk}.
\end{split}
\end{equation*}
Now consider the general case where $h\geq k>1$. Notice that since $(h,k)=1$, we must have $h>k$.

As in the proof of Lemma \ref{lemma8_13_1}, for each integer $1\leq r\leq k-1$, there is a unique $r'$ such that $1\leq r'\leq k-1$ and
\[hr\equiv r' \mod k,\]
which implies that
\[\frac{hr}{k}-\left\lfloor\frac{hr}{k}\right\rfloor=\frac{r'}{k}.\]
Hence,
\begin{equation*}
\begin{split}
\sum_{r'=1}^{k-1}\left(\frac{r'}{k}\right)^2&=\sum_{r=1}^{k-1}\left(\frac{hr}{k}-\left\lfloor\frac{hr}{k}\right\rfloor\right)^2\\
&=2\sum_{r=1}^{k-1}\frac{hr}{k}\left(\frac{hr}{k}-\left\lfloor\frac{hr}{k}\right\rfloor-\frac{1}{2}\right)-\sum_{r=1}^{k-1} \frac{h^2r^2}{k^2}+\sum_{r=1}^{k-1}\left(\left\lfloor\frac{hr}{k}\right\rfloor\right)^2+\sum_{r=1}^{k-1} \frac{hr}{k}.
\end{split}
\end{equation*}
Using Lemma \ref{lemma8_13_3}, we find that
\begin{equation*}
\begin{split}
2hs(h,k)&=\sum_{r'=1}^{k-1}\left(\frac{r'}{k}\right)^2+\sum_{r=1}^{k-1} \frac{h^2r^2}{k^2}-\sum_{r=1}^{k-1} \frac{hr}{k}-2hs(k,h)-\frac{(2hk-3h-k+3)(h-1)}{6}
\end{split}
\end{equation*}
Hence,
\begin{equation*}
\begin{split}
s(h,k)+s(k,h)&=\frac{1}{2h}\left\{\sum_{r'=1}^{k-1}\left(\frac{r'}{k}\right)^2+\sum_{r=1}^{k-1} \frac{h^2r^2}{k^2}-\sum_{r=1}^{k-1} \frac{hr}{k} -\frac{(2hk-3h-k+3)(h-1)}{6} \right\}\\
&=\frac{h^2+k^2-3hk+1}{12hk}.
\end{split}
\end{equation*}
\end{proof}

\section{The Dedekind's Functional Equation}
\begin{theorem}
[The Dedekind's Functional Equation]\label{thm4_5_2}~\\
If $\di\begin{bmatrix} a & b\\ c & d\end{bmatrix}\in \Gamma$ and $c>0$, then
\begin{equation}\label{eq8_13_6}
\eta\left(\frac{a\tau+b}{c\tau+d}\right)=\exp\left( \frac{\pi i\omega(a,b,c,d)}{12 } \right)\left\{-i(c\tau+d)\right\}^{1/2}\eta(\tau),
\end{equation}where
\begin{equation}\label{eq230202_1}\omega(a,b,c,d)=\frac{a+d}{c}+12s(-d,c)\end{equation} is an integer.
\end{theorem}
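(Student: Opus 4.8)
The plan is to prove the functional equation \eqref{eq8_13_6} by strong induction on the lower-left entry $c$, paralleling the standard inductive proof that $S$ and $T$ generate $\Gamma$. Write $M=\begin{bmatrix} a & b\\ c & d\end{bmatrix}$ and $\omega(M)=\omega(a,b,c,d)$; throughout, $z^{1/2}$ denotes the principal square root, which is well defined and has positive real part whenever $z$ lies in the open right half-plane. For the base case $c=1$ we have $b=ad-1$, hence $M=T^{a}ST^{d}$ and $\frac{a\tau+b}{\tau+d}=a-\frac{1}{\tau+d}$; chaining Proposition \ref{prop8_13_4} (for $T^{d}$ and $T^{a}$) with Theorem \ref{transform_eta_S} gives
\[\eta\!\left(\tfrac{a\tau+b}{\tau+d}\right)=e^{\pi i(a+d)/12}\{-i(\tau+d)\}^{1/2}\eta(\tau),\]
which is exactly \eqref{eq8_13_6}, since $s(-d,1)=0$ makes $\omega(M)=a+d$ an integer.

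For the inductive step, fix $c>1$ and suppose \eqref{eq8_13_6} holds, with $\omega$ an integer, for every element of $\Gamma$ whose lower-left entry is a positive integer less than $c$. Because $\gcd(c,d)=1$ and $c>1$, there is a unique integer $k$ for which $d':=d-ck$ satisfies $0<d'<c$; set $b':=b-ak$ and $M_{1}=\begin{bmatrix} b' & -a\\ d' & -c\end{bmatrix}\in\Gamma$. One checks directly that $M_{1}S=MT^{-k}$ in $\Gamma$, so $M=M_{1}ST^{k}$ and hence $\eta(M\tau)=\eta\!\left(M_{1}\sigma\right)$ with $\sigma=-1/(\tau+k)$. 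Since the lower-left entry $d'$ of $M_{1}$ is a positive integer smaller than $c$, the induction hypothesis applies to $M_{1}$; combining it with Theorem \ref{transform_eta_S} (applied at $\tau+k$) and Proposition \ref{prop8_13_4} yields
\[\eta(M\tau)=e^{\pi i(\omega(M_{1})+k)/12}\,\{-i(d'\sigma-c)\}^{1/2}\,\{-i(\tau+k)\}^{1/2}\,\eta(\tau).\]

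Two things remain. First, the automorphy factors must be reassembled: using $ck+d'=d$ one gets $d'\sigma-c=-(c\tau+d)/(\tau+k)$, so $-i(d'\sigma-c)$ and $-i(\tau+k)$ are each in the right half-plane and their product is $c\tau+d$; hence the product of their principal square roots, having argument in $(-\pi/2,\pi/2)$ and squaring to $c\tau+d$, is the principal square root $(c\tau+d)^{1/2}$, and comparing arguments gives $(c\tau+d)^{1/2}=e^{\pi i/4}\{-i(c\tau+d)\}^{1/2}=e^{3\pi i/12}\{-i(c\tau+d)\}^{1/2}$. This turns the display into $\eta(M\tau)=e^{\pi i(\omega(M_{1})+k+3)/12}\{-i(c\tau+d)\}^{1/2}\eta(\tau)$. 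Second, it therefore suffices to prove the exact identity $\omega(M)=\omega(M_{1})+k+3$, which simultaneously shows $\omega(M)\in\mathbb{Z}$, since $\omega(M_{1})\in\mathbb{Z}$ by hypothesis. For this, use Lemma \ref{lemma8_13_1} to write $s(-d,c)=-s(d,c)$, then Lemma \ref{lemma8_13_9} together with $d\equiv d'\pmod{c}$ to replace $s(d,c)$ by $s(d',c)$, then the reciprocity law of Theorem \ref{thm8_13_8} to substitute $s(d',c)+s(c,d')=\frac{(d')^{2}+c^{2}-3cd'+1}{12cd'}$; after clearing denominators, the relations $ad-bc=1$, $d'=d-ck$, $b'=b-ak$ make the numerator collapse to $3cd'$, which gives the identity.

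The step I expect to be the real obstacle is the branch-of-square-root bookkeeping: one must confirm that $\{-i(d'\sigma-c)\}^{1/2}$ and $\{-i(\tau+k)\}^{1/2}$ are genuinely the principal branches of numbers with positive real part, so that multiplying them produces $+\,(c\tau+d)^{1/2}$ rather than its negative — a wrong sign here would corrupt the multiplier by a factor $e^{\pi i}$. By comparison the arithmetic identity $\omega(M)=\omega(M_{1})+k+3$, though it looks elaborate, reduces to a single line once the Dedekind-sum lemmas and the reciprocity law are in hand.
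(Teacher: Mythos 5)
Your proposal is correct and follows essentially the same route as the paper: strong induction on $c$ with base case $c=1$, an inductive step that factors the given matrix as (matrix with smaller lower-left entry)$\cdot S\,T^{k}$, and identification of the multiplier via Lemma \ref{lemma8_13_1}, Lemma \ref{lemma8_13_9} and the reciprocity law of Theorem \ref{thm8_13_8}. The only differences are cosmetic: you reduce $d$ rather than $-d$ modulo $c$ (so your additive constant is $+3$ where the paper's is $-3$, both consistent with the respective decompositions), and you make explicit the square-root branch bookkeeping that the paper compresses into the line $(-i)^{1/2}=e^{-\pi i/4}$.
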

Notice that Theorem \ref{thm4_5_1} is a special case of Theorem \ref{thm4_5_2}. 
\begin{proof}
We prove this by induction on $c$. When $c=1$, $b=ad-1$. Thus,
\[\frac{a\tau+b}{c\tau+d}=\frac{a(\tau+d)-1}{\tau+d}=a-\frac{1}{\tau+d}.\]
It follows from Proposition \ref{prop8_13_4} and Theorem \ref{thm4_5_1} that
\begin{equation*}
\begin{split}
\eta\left(\frac{a\tau+b}{c\tau+d}\right)&=\eta\left(a-\frac{1}{\tau+d}\right)\\
&=\exp\left(\frac{\pi i a}{12}\right)\eta\left(-\frac{1}{\tau+d}\right)\\
&=\exp\left(\frac{\pi i a}{12}\right)\left\{-i(\tau+d)\right\}^{1/2}\eta\left( \tau+d \right)\\
&=\exp\left(\frac{\pi i (a+d)}{12}\right)\left\{-i(\tau+d)\right\}^{1/2}\eta\left( \tau  \right)\\
=&\exp\left( \frac{\pi i\omega(a,b,c,d)}{12 } \right)\left\{-i(c\tau+d)\right\}^{1/2}\eta(\tau),
\end{split}
\end{equation*}where \[\omega(a,b,c,d)=a+d\] is an integer. Since $s(-d,c)=s(-d,1)=0$, this proves \eqref{eq8_13_6} when $c=1$.  

Now we use  principle of strong induction to prove the general case.  Let $c$ be an integer larger than or equal to 2.
Suppose that for all $\di\begin{bmatrix} a' & b'\\ c' & d'\end{bmatrix}\in \Gamma$  with $1\leq c'\leq c-1$,  we have proved the formula \eqref{eq8_13_6} and the statement that $\omega(a', b', c', d')$ is an integer.  Now consider $ \di\begin{bmatrix} a & b\\ c  & d\end{bmatrix}$ with $ad-bc=1$. Since $c$ and $d$ are relatively prime, there is a unique positive integer $r$ less than $c$ such that $-d\equiv r$ mod $ k$.  In other words, there is an integer $q$ such that \[
d=cq-r.\]
Let \[u=aq-b.\] Then
\[\begin{bmatrix} a & b\\ c  & d\end{bmatrix}=\begin{bmatrix} u & a\\ r  & c\end{bmatrix}
\begin{bmatrix} 0 & -1\\1 & 0\end{bmatrix}\begin{bmatrix} 1 & q\\0 & 1\end{bmatrix}.\]
Let $\gamma_1$, $\gamma_2$, $\gamma_3$ be respectively the linear fractional transformations
\[\gamma_1(\tau)=\frac{u\tau +a}{r\tau+c},\quad\gamma_2(\tau)=S(\tau)=-\frac{1}{\tau},\quad \gamma_3(\tau)=T^q(\tau)=\tau+q.\]
Then 
\[\frac{a\tau+b}{c\tau+d}=\gamma_1(\tau')=\frac{u\tau' +a}{r\tau'+c},\quad \tau'=\gamma_2(\gamma_3(\tau))=-\frac{1}{\tau+q}.\]Since $0<r<c$, we can apply induction hypothesis and obtain
\begin{equation*}
\begin{split}
\eta\left(\frac{a\tau+b}{c\tau+d}\right)&=\eta \left(\frac{u\tau' +a}{r\tau'+c}\right)\\
&=\exp\left( \frac{\pi i \omega(u,a,r,c)}{12  } \right) \left\{-i(r\tau'+c)\right\}^{1/2}\eta(\tau'),
\end{split}
\end{equation*}where
\[\omega(u,a,r,c)=\frac{u+c}{r}+12s(-c,r)\] is an integer.
From the case $c'=1$, we  have
\begin{equation*}
\eta(\tau')=\eta\left(-\frac{1}{\tau+q}\right)=\exp\left(\frac{\pi i q}{12}\right)\left\{-i(\tau+q)\right\}^{1/2}\eta(\tau).
\end{equation*} Since
\[(r\tau'+c)(\tau+q)=c(\tau+q)-r=c\tau+d,\]
and \[(-i)^{1/2}=\exp\left(-\frac{\pi i }{4}\right),\]we find that
\[\eta\left(\frac{a\tau+b}{c\tau+d}\right)=\exp\left( \frac{\pi i\omega(a,b,c,d)}{12 } \right) \left\{-i(c\tau+d)\right\}^{1/2}\eta(\tau),\]
where
\begin{equation*}
\omega(a,b,c,d)=\omega(u,a,r,c)+q-3= \frac{u+c+qr}{ r}+12 s(-c,r)-3.
\end{equation*}From the first equality, we conclude by the inductive hypothesis  that $\omega(a,b,c,d)$ is an integer. Now we need to prove that $\omega(a,b,c,d)$ is given by \eqref{eq230202_1}.
By Lemma \ref{lemma8_13_1}, 
\[s(-c,r)=-s(c,r).\]By Theorem \ref{thm8_13_8}, we find that
\begin{equation*}
\begin{split}
s(-c,r)&=s(r,c)-\frac{r^2+c^2-3rc+1}{12rc}.
\end{split}
\end{equation*}Since $-d$ is congruent to $r$ modulo $c$, Lemma \ref{lemma8_13_9} implies that
\[s(-c,r)=s(-d,c)-\frac{r^2+c^2-3rc+1}{12rc}.\]
Hence,
\begin{equation*}
\omega(a,b,c,d)= \Lambda(a,b,c,d)+12s(-d,c),
\end{equation*}
where
\begin{equation*}
\begin{split}
\Lambda(a,b,c,d)&=\frac{u+c+qr}{r}-3-\frac{r^2+c^2-3rc+1}{ rc}\\
&=\frac{uc+ cqr-r^2 -1}{ rc}\\
&=\frac{c(aq-b)-1+dr}{ rc}\\
&=\frac{a(cq-d)+dr}{ rc}\\
&=\frac{a+d}{ c}.
\end{split}
\end{equation*}This proves  \eqref{eq230202_1}. Hence, the theorem is proved.

\end{proof}

\vfill\pagebreak

\bibliographystyle{amsalpha}
\bibliography{ref}
\end{document}